\newtheorem{teo}{Theorem}
\newtheorem{lemma}{Lemma}
\newtheorem{prop}{Proposition}
\newtheorem{cor}{Corollary}
\theoremstyle{definition}
\newtheorem{remark}{Remark}
\theoremstyle{definition}
\theoremstyle{definition}
\newtheorem{example}{Example}
\theoremstyle{remark}
\DeclareMathOperator{\re}{\mathbb{R}}
\DeclareMathOperator{\er}{\mathbf{E}}
\DeclareMathOperator{\pr}{\mathbf{P}}
\DeclareMathOperator{\erv}{\mathbf{e}}
\newcommand{\erva}{\mathbf{e}_\alpha}
\newcommand{\ervk}{\mathbf{e}_k}
\newenvironment{dedication}
        {\vspace{6ex}\begin{quotation}\begin{center}\begin{em}}
        {\par\end{em}\end{center}\end{quotation}}
\title{On exponential functionals, harmonic potential measures and undershoots of subordinators}
\date{}
\author{Larbi Alili\thanks{Department of Statistics, The University of Warwick. E-mail: L.Alili@warwick.ac.uk},
\and Wissem Jedidi\thanks{Department of Statistics \& OR, King Saud University. E-mail: wissem\_jedidi@yahoo.fr} \thanks{Universit\'e de
Tunis El Manar, Facult\'e des Sciences de Tunis, D\'epartement de Math\'ematiques,
Laboratoire d'Analyse Math\'ematiques et Applications LR11ES11.  2092 - El Manar I, Tunis,
Tunisia.}, \and V{\'\i }ctor Rivero\thanks{Centro de Investigaci\'on
en Matem\'aticas (CIMAT A.C.) E-mail: rivero@cimat.mx}}
\begin{document}
\maketitle
\begin{abstract} We establish a link  between the distribution of an exponential functional, $I,$ and the undershoots of a subordinator, which is given in terms of the associated harmonic potential measure. This allows us to give a necessary and sufficient condition in terms of the L\'evy measure for the exponential functional to be multiplicative infinitely divisible. We then provide a formula for the moment generating functions of $\log I$ and $\log R$ where  $R$ is the so-called remainder random variable associated to $I$. We provide a realization of the remainder random variable $R$ as an infinite product involving independent last position random variables of the subordinator. Some properties of harmonic measures are obtained and some examples are provided.
\end{abstract}
\noindent{\bf Mathematics Subject Classification:} 60G51, 60E10, 60E07.\\
\noindent{\bf Keywords:} Exponential functionals, Harmonic potential, Infinite divisibility, Subordinators, Undershoots.
\begin{dedication}
\hspace{1cm}
\vspace*{0cm}{Dedicated to the memory of Professor Marc Yor.}
\end{dedication}

\section{Introduction and main results}
Let $\xi$ be a (possibly killed) subordinator; that is a $[0,\infty]$-valued L\'evy process with non-decreasing paths, absorbing state $\Delta:=\{+\infty\}$ and lifetime $\zeta:=\inf\{ s>0:\ \xi_s=\infty\}$.  We denote by $\phi:\re^{+}\to\re^{+}$ its Laplace exponent, i.e.
$
\phi(\lambda):=-\log\er [e^{-\lambda\xi_{1}}]$,  $\lambda\geq 0$. By the L\'evy-Khintchine formula, we know that
\begin{equation}\label{LK}
\phi(\lambda)=q+\lambda a+\int_{(0,\infty)}(1-e^{-\lambda
x})\Pi(dx),\quad \lambda\geq 0,
 \end{equation}
 where $a\geq 0$ is the drift, $q\geq 0$ is the killing term, and $\Pi$ is a measure on $(0,\infty)$ such that ${\small \int_{(0,\infty)}(x\wedge1)\,\Pi(dx)<\infty}$.  The distribution of $\xi$ is characterized by the characteristic triplet $(q, a, \Pi)$.

 Our main purpose, in this work, is to make some contributions to the description of the distribution of the so-called exponential functional of $\xi$ and its associated remainder random variable, in the terminology proposed by Hirsch and Yor~~\cite{Hirsch-Yor}. The exponential functional $I$  of $\xi$ is defined by
\[
I:=\int^{\infty}_{0}\exp\{-\xi_{s}\}\, ds.
 \]
 For background on exponential functionals and related objects, see the thorough review by Bertoin and Yor \cite{Bertoin-yor-2005}. The random variable $I$ is finite almost surely because either the life-time of $\xi$ is finite a.s. or the strong law of large numbers for  subordinators ensures that it grows at least linearly. Following \cite{Bertoin-yor-2005}, the distribution of $I$ is determined by its
integer moments,  which, as proved by Carmona et al.$\,$\cite{CPY94}, satisfy
 \[
 \er [I^{n}]=\prod^{n}_{i=1}\frac{i}{\phi(i)},\quad n\geq 1.
 \]
We also refer to Gnedin et al.$\,$\cite{GPY} for combinatorial derivations of the moments formula.  Bertoin and Yor \cite{BY2001} proved the  existence of a  r.v.$\,$$R,$  to which Hirsch and Yor~\cite{Hirsch-Yor} gave the name of {\it remainder random variable associated to $I$}, and whose integer moments are given by
\[
\er  [R^{n}]=\prod^{n}_{i=1}\phi(i),\quad n\geq 1.
\]
Furthermore, the distribution of $R$ is moment determinate and, if
$I$ and  $R$ are  taken to be independent then
 \begin{equation}\label{factorisation}
 I R\stackrel{\text{(d)}}{=}\erv,
 \end{equation}
 where $\stackrel{\text{(d)}}{=}$ means equality in distribution and
 $\erv$ is a standard exponential random variable. It has been proved by Berg \cite{berg2}, Theorem 2.2, that $\log{R}$ is a spectrally negative infinitely divisible (i.d.$\,$for short) random variable. Furthermore, its Laplace exponent which, in this case, is defined by
$$\psi(\lambda):=\log \er \left[\exp\{\lambda \log{R}\}\right],$$ is specified by
\begin{equation}\label{PsiR}
\psi(\lambda)=\lambda\log{\phi(1)}+\int_{(0,\infty)}\left((e^{-\lambda
x}-1)+\lambda(1-e^{-x})\right)\frac{e^{-x}}{1-e^{-x}}\frac{\kappa(dx)}{x},
\quad \lambda\geq 0,
\end{equation}
where $\kappa(dx)$ is the unique measure whose Laplace transform is given by
\begin{equation}\label{kappa}
\int_{\mathbb{R}^{+}}\kappa(dx)e^{-\lambda x}=\frac{\phi'(\lambda)}{\phi(\lambda)},\quad \lambda >0.
 \end{equation}
Notice that the rightmost term in (\ref{kappa}) is, indeed, the Laplace transform of a nonnegative measure because $\phi^{\prime}$ and $1/\phi$ are both completely monotone functions, and the product of c.m.$\,$functions is a c.m.$\,$function. The former is the Laplace transform of the measure $a\delta_{0}(dx)+{\bf 1}_{\{x>0\}}x\Pi(dx),$ and the latter is that of the potential of the subordinator $\xi$, see the forthcoming identity (\ref{potential}).
The L\'evy measure of the distribution of $\log{R}$ is then the image by the map $x\rightarrow -x$ of the measure given on the positive half-line by $x^{-1}(e^{x}-1)^{-1}\kappa(dx)$, see \cite{Hirsch-Yor} for further details and a proof of this fact.

In this paper, we are primarily interested in finding a necessary and sufficient condition for the r.v.$\,$$\log I$ to be i.d. This question has been studied by Berg \cite{berg1}, Hirsch and Yor~\cite{Hirsch-Yor}, and   Urbanik \cite{Urbanik} who obtained necessary and sufficient conditions for this to hold in terms of $\phi$ or the measure $\kappa.$ Our first main result, Theorem \ref{exp-undershoot-id}, provides a further equivalent condition which has the advantage of being stated directly in terms of the L\'evy measure of $\xi$. In  Theorem \ref{distribution-infty}, we provide an expression for the moment generating functions of $\log I$ and $\log R$ in the form  of infinite products involving  $\phi.$   It is worth mentioning that the infinite products complete the representations obtained by Maulik and Zwart \cite{Maulik-Zwart-06} and that a closely related result appeared recently in the paper by Patie and Savov \cite{patie-savov}.
Finally, in Theorem \ref{R-product} we obtain some identities in law for $R,$ which extend known identities for the gamma r.v.$\,$obtained by
Gordon~\cite{gordon}.  A key object in our work is the so-called harmonic potential measure of a subordinator which is defined below; we will emphasize its importance in the study of subordinators. In order to provide more details, we will introduce further notation and state some preliminary facts.

To start with, the lifetime $\zeta$ of the subordinator $\xi$ is either  $+\infty$ a.s., in which case we say that $\xi$ is immortal, or exponentially distributed with parameter $\phi(0)=q$, in which case $\xi$  jumps  to the cemetery state $\Delta$ at time $\zeta$ and stays there forever.  In any case, $\xi$ can always be seen as an immortal subordinator killed at an independent exponential time with parameter $q\geq0$, where the case $q=0$ is included to permit $\zeta=\infty$ a.s.   The representation of the Laplace exponent $\phi$ in (\ref{LK}) states that $\phi$ is a Bernstein function. So, associated to $\xi$ there is a Bernstein function. Conversely, given a Bernstein function $\phi$ it is well known that the function $\exp\{-\phi(\lambda)\}$ is the Laplace transform of an i.d.$\,$probability measure, which is concentrated on $\re^{+},$ and thus, associated to it there is a subordinator. For background on subordinators and  Bernstein functions, see for instance \cite{Bertoin-1999} and~\cite{SSV}, respectively.

Next, we denote by $V(dx)$ the potential measure (p.m.$\,$for short) of $\xi$, that is the measure
\[
V(dx)=\int_{\re^{+}}dt\pr(\xi_{t}\in dx),\quad x\geq 0,
\]
 which is characterized by the Laplace transform
\begin{equation}\label{potential}
\int_{\re^{+}}V(dx)\, e^{{-\lambda x}}=1/\phi(\lambda),\quad \lambda\geq 0.
 \end{equation}
The p.m. $\,$$V$  characterizes the law of $\xi.$  For  $\alpha\geq 0,$ the $\alpha$-resolvent of $\xi,$ say $V_{\alpha},$ is the measure given by
 \[
 V_{\alpha}(dx)=\int^{\infty}_{0}dt \,e^{-\alpha t}\pr(\xi_{t}\in dx), \quad x\geq 0.
  \]
  Observe that $V_{0}=V,$ so, hereafter, the latter and former quantities will refer to the same object.

 As we mentioned above, one of our objectives  is to emphasize the importance of the so-called {\it harmonic potential measure} (h.p.m.$\,$for short) of a subordinator, particularly, in the context of the characterization of L\'evy measures of some i.d.$\,$distributions. That is,  distributions of random variables which are related to exponential functionals of subordinators and undershoots. The h.p.m.$\,$is the measure defined by
\[
H(dx):=\int_{0}^{\infty}\frac{dt}{t}\pr(\xi_{t}\in dx),\quad x> 0.
\]This terminology  is adopted from the corresponding object in random walks theory, i.e. harmonic renewal measure,  see for instance \cite{GOT}. As we will see later, in the examples, there are several subordinators for which the h.p.m.$\,$$H$ can be obtained explicitly. More can be extracted from references such as \cite{Hirsch-Yor} and \cite{Song-Vondracek-2006}.

 In the next Lemma, we establish an identity relating the h.p.m.$\,$to the L\'evy measure  of the i.d.$\,$distribution giving the last position of $\xi$ prior to crossing. To that end,   let  $L_{t}=\inf\left\{s>0: \xi_{s}>t\right\}$ be the first passage time above the level $t$, for $\xi,$ and $G_{t}=\xi_{L_{t}-}$   be the last position below the level $t$. We define the L\'evy tail   $\overline{\Pi}: \mathbb{R}^+\rightarrow \mathbb{R}$  by   setting  $\overline{\Pi}(x)=q+\Pi(x, \infty)$, for  $x> 0$, and $\overline{\Pi}(+\infty)=q$.
\begin{lemma}\label{exp-undershoot}
Assuming that  $\erva$ is an  exponential random variable with parameter $\alpha>0$, which is independent of $\xi$, the following assertions hold true.
\begin{itemize}
\item[1)] The last position below the level ${\erva}$ and the undershoot, $G_{\erva}$ and $\erva  - G_{\erva}$ respectively, are independent r.v.'s and their Laplace transforms are given,  for every $\lambda\geq 0$, by
\begin{eqnarray}
 \er \left[\exp\{-\lambda \, G_{\erva}\}\right]&=&\frac{\phi(\alpha)}{\phi(\alpha+\lambda)} \label{undershoot1}
 \end{eqnarray}
 and
 \begin{eqnarray}
\er \left[\exp\{-\lambda \, (\erva  - G_{\erva})\}\right]&=& \frac{\alpha}{\phi(\alpha)} \frac{\phi(\alpha+\lambda)}{\alpha +\lambda}.\label{undershoot0}
 \end{eqnarray}
 \item[2)]  $G_{\erva}$  is a positive i.d.$\,$random variable having the probability distribution
 \[
 \pr(G_{\erva}\in dx)=\phi(\alpha)e^{-\alpha x}V(dx),\quad x\geq 0.
 \]
 Its Laplace exponent is given by the Bernstein function
 $$\lambda\mapsto \log \frac{\phi(\lambda +\alpha)}{\phi(\alpha)} = \int_{(0,\infty)}  (1- e^{-\lambda x}) e^{-\alpha x} \, H(dx),\quad \lambda \geq 0.$$
 In particular, we have
 \[
 \er\left(G_{\erva}\right)=\frac{\phi^{\prime}(\alpha)}{\phi(\alpha)}<\infty.
 \]
\item[3)] $\erva  - G_{\erva}$ has the distribution
\[
\pr( \erva  - G_{\erva}\in dx)=
\frac{\alpha }{\phi(\alpha)} \left(a \delta_0(dx)  + e^{-\alpha x} \overline{\Pi}(x)dx\right),\quad x\geq 0.
\]
\end{itemize}
\end{lemma}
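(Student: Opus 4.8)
The plan is to derive everything from the law of the last passage position at a \emph{fixed} level and then to randomise that level. Since $\erva$ is independent of $\xi$, conditioning on $\{\erva=t\}$ turns $G_{\erva}=\xi_{L_{\erva}-}$ into $G_t=\xi_{L_t-}$; hence for every bounded measurable $f,g\ge0$ one has $\er[f(G_{\erva})g(\erva-G_{\erva})]=\int_0^\infty\alpha e^{-\alpha t}\,\er[f(G_t)g(t-G_t)]\,dt$. So it suffices to know $\pr(G_t\in dx)$ and to carry out the exponential average; the independence claimed in part~1 will come out as a factorisation of the resulting expression.

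First I would record the law of $G_t$. Applying the compensation formula (L\'evy system) to the first jump of $\xi$ that overshoots the level $t$, and adding the contributions of creeping (present only when $a>0$) and of killing, gives
\[
\pr(G_t\in dx)=\overline{\Pi}(t-x)\,V(dx)\,\ind_{[0,t)}(x)+a\,v(t)\,\delta_t(dx),\qquad x\in[0,t],
\]
where $v$ denotes the density of $V$ on $(0,\infty)$ when $a>0$, and the killing is already encoded by the term $q$ inside $\overline{\Pi}=q+\Pi(\cdot,\infty)$ (a jump to $\Delta$ crosses every remaining level). Substituting this into the averaging identity, using Tonelli (the integrand is nonnegative) and the substitution $w=t-x$, I obtain the key factorised identity
\[
\er\big[f(G_{\erva})g(\erva-G_{\erva})\big]=\alpha\left(\int_{[0,\infty)}e^{-\alpha x}f(x)\,V(dx)\right)\left(a\,g(0)+\int_0^\infty e^{-\alpha w}g(w)\,\overline{\Pi}(w)\,dw\right).
\]
Taking $f\equiv g\equiv1$, an elementary computation from (\ref{LK}) gives $\int_0^\infty e^{-\alpha w}\overline{\Pi}(w)\,dw=(\phi(\alpha)-a\alpha)/\alpha$, so the right-hand side equals $\alpha\cdot\phi(\alpha)^{-1}\cdot\big(a+(\phi(\alpha)-a\alpha)/\alpha\big)=1$ by (\ref{potential}). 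Reading off the two factors then proves parts 1, 2 and 3 simultaneously: the product form is exactly the independence of $G_{\erva}$ and $\erva-G_{\erva}$; the first factor identifies $\pr(G_{\erva}\in dx)=\phi(\alpha)e^{-\alpha x}V(dx)$; and the second identifies $\pr(\erva-G_{\erva}\in dx)=\tfrac{\alpha}{\phi(\alpha)}\big(a\,\delta_0(dx)+e^{-\alpha x}\overline{\Pi}(x)\,dx\big)$.

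The remaining statements are short consequences. Putting $f(x)=e^{-\lambda x}$ (resp.\ $g(x)=e^{-\lambda x}$) in the factorised identity and simplifying with (\ref{potential}) and (\ref{LK}) yields (\ref{undershoot1}) and (\ref{undershoot0}). For the infinite divisibility of $G_{\erva}$ and the stated expression of its Laplace exponent, note that (\ref{undershoot1}) gives $-\log\er[e^{-\lambda G_{\erva}}]=\log(\phi(\alpha+\lambda)/\phi(\alpha))$, and I would match this with the harmonic potential measure $H$: since $\int_{(0,\infty)}e^{-\alpha x}\pr(\xi_t\in dx)=e^{-t\phi(\alpha)}-\pr(\xi_t=0)$, Tonelli and the Frullani identity $\int_0^\infty t^{-1}(e^{-bt}-e^{-ct})\,dt=\log(c/b)$ yield $\int_{(0,\infty)}(1-e^{-\lambda x})e^{-\alpha x}H(dx)=\log(\phi(\alpha+\lambda)/\phi(\alpha))$; the finiteness of this quantity shows $e^{-\alpha x}H(dx)$ is a L\'evy measure with neither drift nor killing, so $\lambda\mapsto\log(\phi(\alpha+\lambda)/\phi(\alpha))$ is a Bernstein function and $G_{\erva}$ is positive and i.d.\ (alternatively, its derivative $\phi'(\alpha+\lambda)/\phi(\alpha+\lambda)$ is completely monotone by (\ref{kappa})). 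Finally, differentiating (\ref{undershoot1}) at $\lambda=0$ gives $\er(G_{\erva})=\phi'(\alpha)/\phi(\alpha)$, which is finite because $\phi'(\alpha)=a+\int_{(0,\infty)}xe^{-\alpha x}\Pi(dx)<\infty$ for $\alpha>0$.

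The main obstacle is the first step: writing $\pr(G_t\in dx)$ correctly in all regimes --- in particular isolating the creeping atom at $x=t$ when $a>0$, which is precisely what produces the atom $a\,\delta_0$ in the law of the undershoot, and treating the killing uniformly through $\overline{\Pi}$ so that $q$ appears in the right places. Once that law is in hand, the averaging and the Frullani computation are routine, modulo the (elementary, since nonnegative) Fubini--Tonelli interchanges.
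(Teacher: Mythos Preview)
Your proof is correct and follows essentially the same route as the paper: both start from the law of $G_t$ at a fixed level (the identity $\pr(G_t\in dx)=\overline{\Pi}(t-x)V(dx)$ on $[0,t)$ together with the creeping atom $av(t)\delta_t$), average over the independent exponential barrier, and obtain the factorisation that yields independence and the two marginals; the Bernstein/i.d.\ structure of $G_{\erva}$ is then read off via Frullani, exactly as in the paper's identity~(\ref{eq:1}). The only cosmetic difference is that you compute the bilinear form $\er[f(G_{\erva})g(\erva-G_{\erva})]$ for general test functions, while the paper specialises at once to $f(x)=e^{-\lambda x}$, $g(x)=e^{-\mu x}$ and factorises the joint Laplace transform---the computations are line-by-line the same.
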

 Formula (\ref{undershoot1}) can be found in Bertoin \cite{Bertoin-1999} Lemma 1.11, we also refer to Winkel \cite{Winkel} for the study of some related distributions.  For the sake of completeness, a proof will be given below in Section \ref{subsectionundershoot}.

\begin{remark} In the case where $x\mapsto  \overline{\Pi}(x)$ is log-convex, which is equivalent to say that the function
$x\mapsto \alpha e^{-\alpha x} \overline{\Pi} (x)/\phi(\alpha)$
bears the same property for some (and hence for all) $\alpha >0$,
Steutel's theorem  (Theorem 10.2 in Chapter III in Steutel and van
Harn \cite{steutel}) insures then that the undershoot $\erva-
G_{\erva}$ is i.d. This fact is  relevant for
Theorem \ref{exp-undershoot-id}  which is stated below.
\end{remark}
Before stating our first main result, we provide an elementary, but key, lemma that allows us to relate the h.p.m.$\,$with the measure $\kappa$ defined by formula (\ref{kappa});  this result plays an important role in the subsequent characterization of the law of $R$ and  the infinite divisibility of $\log I.$
This will also lead to a relation between these random variables and,  $G_{\erva}$ and $\erva  - G_{\erva}$.
\begin{lemma}\label{harmonic}We have the following identity,
\[
\iint_{(0,\infty)\times (0,\infty)}ye^{-\lambda y}\pr(\xi_{t}\in dy)\frac{dt}{t}=\frac{\phi'(\lambda)}{\phi(\lambda)},\quad \lambda>0.
 \]
It follows that
 \[
 \kappa(dx)=xH(dx),\quad x>0.
 \]
Furthermore, $H(dx)$ is the unique measure such that
\begin{equation}\label{fisurfi}
\log \frac{\phi(\lambda)}{\phi(1)}=\int_{(0,\infty)}
(e^{-x}-e^{-\lambda x})H(dx), \quad \lambda >0.
\end{equation}
 As a consequence, for any fixed $c>0,$ the measure $e^{-cx}H(dx)$ is the h.p.m.$\,$of the subordinator with Laplace exponent $\phi(\cdot+c)$  and the L\'evy measure of the r.v.$\,$$G_{\erv_c}$.
\end{lemma}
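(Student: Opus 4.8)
The whole lemma can be reduced to the single Laplace-transform identity $\int_{[0,\infty)}e^{-\lambda y}\,\pr(\xi_t\in dy)=e^{-t\phi(\lambda)}$ together with injectivity of the Laplace transform on measures, so the plan is essentially a sequence of differentiations/integrations under the integral sign.

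First I would prove the displayed double-integral identity. Fix $\lambda>0$. A Laplace transform is smooth on the interior of its domain of convergence, so differentiating $\int_{[0,\infty)}e^{-\lambda y}\,\pr(\xi_t\in dy)=e^{-t\phi(\lambda)}$ in $\lambda$ gives $\int_{(0,\infty)}y e^{-\lambda y}\,\pr(\xi_t\in dy)=t\,\phi'(\lambda)e^{-t\phi(\lambda)}$; here $\phi'(\lambda)=a+\int_{(0,\infty)}xe^{-\lambda x}\Pi(dx)<\infty$ because $xe^{-\lambda x}\le C_\lambda(x\wedge1)$ on $(0,\infty)$ and $\int(x\wedge1)\Pi(dx)<\infty$, and $\phi(\lambda)>0$ since $\phi\not\equiv0$. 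Dividing by $t$ and integrating over $t\in(0,\infty)$, with Tonelli applicable because the integrand is nonnegative, yields $\iint_{(0,\infty)^2}ye^{-\lambda y}\,\pr(\xi_t\in dy)\frac{dt}{t}=\phi'(\lambda)\int_0^\infty e^{-t\phi(\lambda)}\,dt=\phi'(\lambda)/\phi(\lambda)$. Reading the left-hand side through the definition $H(dy)=\int_0^\infty\frac{dt}{t}\pr(\xi_t\in dy)$ (Tonelli again) gives $\int_{(0,\infty)}ye^{-\lambda y}H(dy)=\phi'(\lambda)/\phi(\lambda)$ for all $\lambda>0$; since by (\ref{kappa}) $\kappa$ is \emph{the} measure with Laplace transform $\phi'/\phi$ and $xH(dx)$ has the same Laplace transform, injectivity forces $\kappa(dx)=xH(dx)$.

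For the integrated form (\ref{fisurfi}) I would integrate $\phi'(\mu)/\phi(\mu)=\int_{(0,\infty)}xe^{-\mu x}H(dx)$ over $\mu\in[1,\lambda]$: since $\mu\mapsto\phi'(\mu)/\phi(\mu)$ is continuous and finite on compact subintervals of $(0,\infty)$, Tonelli applies and $\int_1^\lambda xe^{-\mu x}\,d\mu=e^{-x}-e^{-\lambda x}$ produces exactly $\log(\phi(\lambda)/\phi(1))=\int_{(0,\infty)}(e^{-x}-e^{-\lambda x})H(dx)$. Uniqueness follows by reversing the computation: if $\widetilde H$ satisfies the same identity then, restricting to $\lambda>1$ (where $e^{-x}-e^{-\lambda x}\ge0$, so the integral is unambiguously finite, which also forces $\int_{(0,1)}x\widetilde H(dx)<\infty$ and $\int_{(1,\infty)}e^{-x}\widetilde H(dx)<\infty$), dominated convergence lets one differentiate to get $\int_{(0,\infty)}xe^{-\lambda x}\widetilde H(dx)=\phi'(\lambda)/\phi(\lambda)$ for $\lambda>1$; then $x\widetilde H(dx)$ and $xH(dx)$ have the same Laplace transform on $(1,\infty)$, hence coincide.

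Finally, for the statement about $e^{-cx}H(dx)$: the shift $\lambda\mapsto\phi(\lambda+c)$ is again a Bernstein function (indeed $\phi(\lambda+c)=\phi(c)+a\lambda+\int_{(0,\infty)}(1-e^{-\lambda x})e^{-cx}\Pi(dx)$), so it is the Laplace exponent of a subordinator $\xi^{(c)}$, and by (\ref{fisurfi}) applied to $\phi(\cdot+c)$ its harmonic potential measure $H_c$ is the unique measure with $\log\big(\phi(\lambda+c)/\phi(1+c)\big)=\int_{(0,\infty)}(e^{-x}-e^{-\lambda x})H_c(dx)$. Writing $(e^{-x}-e^{-\lambda x})e^{-cx}=(e^{-x}-e^{-(\lambda+c)x})-(e^{-x}-e^{-(1+c)x})$ and integrating each term against $H(dx)$ via (\ref{fisurfi}) shows $e^{-cx}H(dx)$ satisfies this defining relation, whence $H_c(dx)=e^{-cx}H(dx)$. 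That the same measure is the L\'evy measure of $G_{\erv_c}$ is then immediate from Lemma \ref{exp-undershoot}(2), which identifies the Laplace exponent of $G_{\erv_c}$ with $\lambda\mapsto\int_{(0,\infty)}(1-e^{-\lambda x})e^{-cx}H(dx)$. The only points needing care are the repeated Tonelli/differentiation-under-the-integral justifications and, in the uniqueness step, restricting to $\lambda>1$ so that the sign of $e^{-x}-e^{-\lambda x}$ is controlled; there is no genuine obstacle beyond this bookkeeping.
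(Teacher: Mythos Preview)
Your proof is correct and follows essentially the same route as the paper: differentiate $\er[e^{-\lambda\xi_t}]=e^{-t\phi(\lambda)}$ to get $\er[\xi_t e^{-\lambda\xi_t}]=t\phi'(\lambda)e^{-t\phi(\lambda)}$, integrate $dt/t$ over $(0,\infty)$, invert the Laplace transform to identify $\kappa(dx)=xH(dx)$, and then integrate $\phi'/\phi$ over $[1,\lambda]$ to obtain (\ref{fisurfi}). The only difference is that you supply considerably more analytic bookkeeping (Tonelli, the explicit uniqueness argument via differentiation on $\lambda>1$, and the algebraic verification that $e^{-cx}H(dx)$ satisfies the defining relation for the shifted subordinator), whereas the paper disposes of the lemma in three sentences by pointing to the computation (\ref{calcul}) already carried out in the proof of Lemma~\ref{exp-undershoot} and to (\ref{eq:1}) for the shift property.
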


\begin{remark} For $q\in \re$, let us define  a measure $\mathcal{H}_{q}$
by
 \[
 \mathcal{H}_{q}(dx):=\int^{\infty}_{0}\frac{dt}{t} \,e^{-q  t}\pr(\xi_{t}\in dx), \quad x> 0.
  \]
When $q\geq 0$, $\mathcal{H}_{q}$ corresponds to
the harmonic $q$-potential measure associated to $\xi$. It is simply
the h.p.m of the subordinator $\xi$ killed at rate $q$, which has the Laplace exponent  $\lambda \mapsto q+ \phi(\lambda)$ . When $q <
0$, the measure $\mathcal{H}_{q}$ appears, for example, if we look at the
 h.p.m.$\,H^{(\alpha)}$ associated to the subordinator
$\xi^{(\alpha)}$ with Laplace exponent $\lambda \mapsto
\phi(\lambda+\alpha)-\phi(\alpha)$ for some $\alpha >0$. Indeed,  since
$$\pr( \xi^{(\alpha)}_t\in dx)=\exp \{-\alpha x +\phi(\alpha)t \} \pr( \xi_t \in dx),\quad t\geq 0,$$
we obtain $H^{(\alpha)}(dx)=e^{-\alpha
x}\mathcal{H}_{-\phi(\alpha)}(dx)$, $x>0$. Of
course, for $\alpha>0$, $H^{(\alpha)}$ is different from the
h.p.m.$\,$$e^{-\alpha \cdot }H$  which corresponds to the Bernstein
function $\lambda \mapsto \phi(\lambda+\alpha)$ of
Lemma~\ref{harmonic} above.
\end{remark}

The focus in the following Theorem is on a characterization of the infinite divisibility of the r.v.'s $\erva- G_{\erva}$ and $\log I$ in terms of $\phi$, $H$ and $\Pi$.
\begin{teo}\label{exp-undershoot-id}
 The following assertions are equivalent:
\begin{itemize}
 \item[(i)] $\erva- G_{\erva}$ is i.d.$\,$for some, and hence for all,  $\alpha >0$;

 \item[(ii)]  the probability measure
\[
\frac{\alpha }{\phi(\alpha)} \left(a \delta_0(dx)  + e^{-\alpha x} \overline{\Pi}(x){\bf 1}_{\{x>0\}} dx\right),\quad x\geq 0,
 \]
 is the law of an i.d.$\,$random variable for some, and hence for all,  $\alpha>0$;

 \item[(iii)]  $\log I$ is infinitely divisible;

 \item[(iv)] $\lambda \mapsto \frac{1}{\lambda} - \frac{\phi'(\lambda)}{\phi(\lambda)}$ is completely monotone (c.m.$\,$for short);

 \item[(v)] the measure $dx - x H(dx), \, x>0,$ is nonnegative;

\item[(vi)] the h.p.m.$\,$of $\xi$ has a density on $(0,\infty),$ say $\rho,$ with respect to the measure $dx/x$ which is such that $\rho(x)\leq 1$ for every $x>0.$
 \end{itemize}
\end{teo}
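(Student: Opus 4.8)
The plan is to take the (a priori signed) measure $\sigma(dx):=dx-xH(dx)$ on $(0,\infty)$ as a hub and to prove that each of (i)--(vi) is equivalent to the single statement ``$\sigma$ is a nonnegative measure''. Two of the equivalences are essentially formal. Assertion (v) \emph{is} the nonnegativity of $\sigma$, and (v)$\Leftrightarrow$(vi) is immediate, since $dx-xH(dx)\geq 0$ says exactly that $xH(dx)\leq dx$, whence $xH(dx)$ is absolutely continuous with Radon--Nikodym density valued in $[0,1]$, i.e.\ $H(dx)=\rho(x)\,dx/x$ with $\rho\leq 1$; the converse is the same computation read backwards. For (iv)$\Leftrightarrow$(v) I would note that $1/\lambda=\int_{(0,\infty)}e^{-\lambda x}\,dx$ and, by (\ref{kappa}) together with the identity $\kappa(dx)=xH(dx)$ of Lemma~\ref{harmonic}, $\phi'(\lambda)/\phi(\lambda)=\int_{(0,\infty)}e^{-\lambda x}\,xH(dx)$; hence $\lambda\mapsto 1/\lambda-\phi'(\lambda)/\phi(\lambda)$ is the Laplace transform of $\sigma$, and by the Bernstein--Widder theorem it is completely monotone if and only if $\sigma\geq 0$.

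Next I would treat the undershoot. By part~3 of Lemma~\ref{exp-undershoot}, the probability measure displayed in (ii) is exactly the law of $\erva-G_{\erva}$, so (i)$\Leftrightarrow$(ii) needs nothing. As $\erva-G_{\erva}$ is $[0,\infty)$-valued, it is infinitely divisible precisely when its Laplace transform $\lambda\mapsto\frac{\alpha}{\phi(\alpha)}\frac{\phi(\alpha+\lambda)}{\alpha+\lambda}$, given by (\ref{undershoot0}), is of the form $e^{-\Phi_\alpha(\lambda)}$ for a Bernstein function $\Phi_\alpha$ with $\Phi_\alpha(0)=0$. Here $\Phi_\alpha(\lambda)=\log\frac{\phi(\alpha)}{\alpha}+\log(\alpha+\lambda)-\log\phi(\alpha+\lambda)$, so $\Phi_\alpha(0)=0$ and, by the previous step, $\Phi_\alpha'(\lambda)=\frac{1}{\alpha+\lambda}-\frac{\phi'(\alpha+\lambda)}{\phi(\alpha+\lambda)}=\int_{(0,\infty)}e^{-\lambda x}e^{-\alpha x}\sigma(dx)$. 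Thus $\Phi_\alpha$ is Bernstein --- note that $\Phi_\alpha\geq 0$ follows once $\Phi_\alpha'$ is completely monotone, since then $\Phi_\alpha'\geq 0$ and $\Phi_\alpha(0)=0$ --- if and only if $e^{-\alpha\cdot}\sigma\geq 0$, i.e.\ if and only if $\sigma\geq 0$. Since this last condition does not involve $\alpha$, this yields (i)$\Leftrightarrow$(v) and simultaneously the ``for some, hence for all $\alpha>0$'' clauses in (i) and (ii).

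The substantial step is (iii)$\Leftrightarrow$(v). Set $\psi_I(\lambda):=\log\er[I^\lambda]$. The factorization (\ref{factorisation}) with $I$ and $R$ independent gives $\er[I^\lambda]\,\er[R^\lambda]=\Gamma(1+\lambda)$, hence $\psi_I(\lambda)=\log\Gamma(1+\lambda)-\psi(\lambda)$ for $\lambda>-1$. The key remark is that $\log\Gamma(1+\lambda)$ is the right-hand side of (\ref{PsiR}) in the degenerate case $\phi=\mathrm{id}$ (for which $\phi(1)=1$, $\kappa(dx)=dx$ and $H(dx)=dx/x$) --- the classical Malmsten representation --- so in particular $\log\erv$ is infinitely divisible. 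Subtracting the two instances of (\ref{PsiR}) and using $\kappa(dx)=xH(dx)$ gives
\[
\psi_I(\lambda)=-\lambda\log\phi(1)+\int_{(0,\infty)}\left((e^{-\lambda x}-1)+\lambda(1-e^{-x})\right)\frac{e^{-x}}{1-e^{-x}}\left(\frac{dx}{x}-H(dx)\right),\qquad\lambda>-1,
\]
where $\frac{dx}{x}-H(dx)$ is nonnegative exactly when $\sigma$ is. If $\sigma\geq 0$, then --- the bracket being $O(x^{2})$ near $0$ while $\frac{e^{-x}}{1-e^{-x}}\bigl(\frac{dx}{x}-H(dx)\bigr)\leq\frac{e^{-x}}{1-e^{-x}}\frac{dx}{x}$ is of order $x^{-2}\,dx$ there --- the right-hand side is a genuine L\'evy--Khintchine exponent of an $\re$-valued (spectrally negative) infinitely divisible law, say that of a variable $Y$ taken independent of $R$; then $Y+\log R$ and $\log I+\log R$ both have cumulant transform $\log\Gamma(1+\cdot)$, hence the same law (that of $\log\erv$), and cancelling the never-vanishing characteristic function of the infinitely divisible variable $\log R$ gives $Y\stackrel{\text{(d)}}{=}\log I$, so (iii) holds. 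Conversely, if $\log I$ is infinitely divisible, then additivity of L\'evy measures under independent sums, applied to $\log\erv=\log I+\log R$, forces the L\'evy measure of $\log I$ to equal that of $\log\erv$ minus that of $\log R$; by the descriptions recalled just after (\ref{kappa}) this equals the image under $x\mapsto-x$ of $\frac{e^{-x}}{1-e^{-x}}\bigl(\frac{dx}{x}-H(dx)\bigr)$, and its nonnegativity, since $\frac{e^{-x}}{1-e^{-x}}>0$, forces $\sigma\geq 0$, i.e.\ (v).

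I expect the only genuinely delicate points to lie in this last step: the clean subtraction of the two representations of type (\ref{PsiR}) --- they share the common compensator $1-e^{-x}$, so no spurious linear term is produced, but this must be verified --- and the passage, in both directions, between ``$\psi_I$ is in L\'evy--Khintchine form'' and ``$\log I$ is infinitely divisible'', which I would run through the never-vanishing characteristic function of $\log R$ rather than through analytic continuation of $\psi_I$. Everything else reduces to Laplace-transform bookkeeping and the complete-monotonicity / Bernstein criteria already used in the paper.
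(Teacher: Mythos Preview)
Your proposal is correct. The equivalences (i)$\Leftrightarrow$(ii), (v)$\Leftrightarrow$(vi), and (i)$\Leftrightarrow$(v) match the paper's argument almost verbatim --- the paper writes the last of these as the Frullani identity
\[
\er\bigl[e^{-\lambda(\erva-G_{\erva})}\bigr]=\exp\Bigl\{-\int_{(0,\infty)}(1-e^{-\lambda x})e^{-\alpha x}\bigl(x^{-1}dx-H(dx)\bigr)\Bigr\},
\]
while you differentiate the exponent and invoke the Bernstein criterion, which is the same computation.

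The genuine difference is in (iii). The paper does not argue (iii)$\Leftrightarrow$(v) directly: it quotes Berg~\cite{berg1}, Theorem~1.9, for (iii)$\Leftrightarrow$(iv), and Hirsch--Yor~\cite{Hirsch-Yor}, Theorem~3.3, for (iv)$\Leftrightarrow$(v$'$). You instead give a self-contained argument: from the factorization $IR\stackrel{\text{(d)}}{=}\erv$ and Berg's formula~(\ref{PsiR}) for $\psi_R$ (already stated in the introduction), you subtract the instance $\phi=\mathrm{id}$ to obtain an explicit candidate L\'evy--Khintchine representation for $\log I$, and then run the comparison of L\'evy measures in both directions, cancelling the never-vanishing characteristic function of the i.d.\ variable $\log R$. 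This buys you independence from the external references \cite{berg1} and \cite{Hirsch-Yor} --- everything is internal to the paper's setup --- at the cost of the extra paragraph of bookkeeping you flag at the end (the shared compensator $1-e^{-x}$, and the identification of laws via the characteristic function of $\log R$). The paper's route is shorter but leans on two black-box citations; yours reproves their content using only~(\ref{PsiR}) and Lemma~\ref{harmonic}.
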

Notice that a consequence of  Lemma~\ref{harmonic} is that
condition $(v)$  is equivalent to
\begin{itemize}
\item[$(v')$] The measure $dx - \kappa(dx), \, x>0,$ is nonnegative.
\end{itemize}
A portion of Theorem \ref{exp-undershoot-id} is known. The equivalence between (iii) and (iv) has been proved in \cite{berg1}, and the equivalence between (iv) and (v') is obtained in \cite{Hirsch-Yor}. The equivalence between (v) and (vi) is elementary. Our main contribution to this result is the equivalence between (ii) and (v) and thus to (v'). Condition (ii) has the advantage of being stated in terms of the L{\'e}vy measure.  Although verifying the infinite divisibility condition could be a hard task,  the following corollary gives a straightforward consequence which does not seem to be easy to obtain from the other conditions in the theorem.
\begin{cor} If $\xi$ has no killing term and its jumps are bounded above by some fixed and finite constant $c>0,$ equivalently $\overline{\Pi}(x)=0$ for $x>c,$ then the r.v.$\,$$\log I$ is not infinitely divisible.
\end{cor}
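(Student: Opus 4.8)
The strategy is to prove the contrapositive of condition (ii) in Theorem~\ref{exp-undershoot-id}, and then invoke the equivalence (ii)$\Leftrightarrow$(iii). Suppose $\xi$ has no killing term ($q=0$) and $\overline{\Pi}(x)=0$ for $x>c$. Then by part 3) of Lemma~\ref{exp-undershoot}, for any fixed $\alpha>0$ the undershoot random variable $\erva-G_{\erva}$ has law
\[
\pr(\erva-G_{\erva}\in dx)=\frac{\alpha}{\phi(\alpha)}\bigl(a\,\delta_0(dx)+e^{-\alpha x}\overline{\Pi}(x)\,dx\bigr),\quad x\geq 0,
\]
and since $\overline{\Pi}$ vanishes on $(c,\infty)$, this measure is \emph{supported in the bounded set} $[0,c]$. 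The key observation is then: a nondegenerate random variable with bounded support cannot be infinitely divisible. Indeed, if $X\stackrel{\text{(d)}}{=}X_1+\cdots+X_n$ with the $X_i$ i.i.d.\ for every $n$, then $\mathrm{supp}(X)$ contains $\mathrm{supp}(X_1)+\cdots+\mathrm{supp}(X_1)$, forcing $\mathrm{supp}(X_1)\subseteq[0,c/n]$ for all $n$, hence $X_1$ is degenerate and so is $X$. But $\erva-G_{\erva}$ is genuinely nondegenerate here, because $\overline{\Pi}\not\equiv 0$ (a subordinator with $\Pi=0$ and $q=0$ is deterministic, which is excluded as soon as $I$ is a genuine random variable) so the absolutely continuous part $e^{-\alpha x}\overline{\Pi}(x)\,dx$ carries positive mass on an interval.

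**Concluding the argument.** Therefore $\erva-G_{\erva}$ is not i.d.\ for any $\alpha>0$, so condition (i)---equivalently (ii)---of Theorem~\ref{exp-undershoot-id} fails, and hence condition (iii) fails: $\log I$ is not infinitely divisible. I would spell out explicitly that the support statement is the crux, and that it is precisely condition (ii), phrased directly in terms of the L\'evy measure, that makes this deduction transparent---which is the whole point the corollary is illustrating.

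**Where the subtlety lies.** The only point requiring a little care is the degeneracy check: one must rule out the trivial case where the undershoot law reduces to $\delta_0$, i.e.\ where $a>0$ is the drift but $\overline{\Pi}\equiv 0$. But $\overline{\Pi}\equiv 0$ together with $q=0$ means $\phi(\lambda)=a\lambda$ and $\xi_t=at$ is deterministic, so $I=1/a$ is a constant and $\log I$ is trivially i.d.; this degenerate situation is implicitly excluded (or can be noted as an exception). In all genuine cases $\overline{\Pi}$ is a nonincreasing function, not identically zero on $(0,c]$, so $e^{-\alpha x}\overline{\Pi}(x)\,dx$ puts mass on a nondegenerate interval and the bounded-support obstruction applies. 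I do not expect any real obstacle beyond stating this cleanly; the proof is short once one recognizes that bounded support is incompatible with infinite divisibility of a nondegenerate law.
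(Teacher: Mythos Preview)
Your proposal is correct and follows essentially the same route as the paper: the paper's proof is a single sentence stating that the corollary ``readily follows from (ii) and the well known fact that an i.d.\ distribution can not have a bounded support.'' You have simply unpacked this, and in doing so you explicitly handle the degenerate pure-drift edge case (where the undershoot law reduces to $\delta_0$), which the paper's one-line argument glosses over.
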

This corollary readily follows from (ii) and the well known fact that an i.d.$\,$distribution can not have a bounded support.

Moreover, relating the measures $\kappa$ and $H$ allows to have a
better understanding of $\kappa$. See
Section~\ref{furtherproperties} where several properties of $H$ are
obtained; these will give place to the proofs
of the forthcoming Corollaries~\ref{cor:1} and \ref{cor:2}.
\begin{cor}\label{cor:1}
If  $\xi$ has a strictly positive drift and is not a degenerate pure drift process then the random variable $\log{I}$ is not i.d.
\end{cor}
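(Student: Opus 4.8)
The plan is to deduce the statement from Theorem~\ref{exp-undershoot-id}, by showing that its condition (vi) fails; to that end I would express the density of the harmonic potential measure of $\xi$ through the renewal density of $\xi$ and then locate a point at which it exceeds $1$. Since the drift $a$ is strictly positive, the potential measure $V$ is absolutely continuous on $[0,\infty)$ with a continuous density $v$; it is classical that $v(0+)=1/a$ and that $a\,v(x)=\pr(\xi\text{ hits }x)\le 1$ for every $x\ge 0$. Now $\phi'$ is the Laplace transform of $a\delta_0(dx)+x\,\Pi(dx)$ while $1/\phi$ is that of $V$ by \eqref{potential}, so by \eqref{kappa} and uniqueness $\kappa=(a\delta_0+x\Pi)\ast V$; hence $\kappa$ is absolutely continuous with density
\[
\rho(x)=a\,v(x)+\int_{(0,x]}y\,v(x-y)\,\Pi(dy),\qquad x>0,
\]
and, by Lemma~\ref{harmonic} (which gives $\kappa(dx)=xH(dx)$), this $\rho$ is exactly the density of the h.p.m.\ $H$ with respect to $dx/x$. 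By Theorem~\ref{exp-undershoot-id}\,(vi) it therefore suffices to produce a single point $x_0$ with $\rho(x_0)>1$.

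Two observations organise the search. First, $\rho(0+)=1$: indeed $a\,v(x)\to 1$ and, $v$ being bounded near the origin, $\int_{(0,x]}y\,v(x-y)\,\Pi(dy)\le(\sup_{[0,x]}v)\int_{(0,x]}y\,\Pi(dy)\to 0$; a finer expansion, using $\int_0^x\overline{\Pi}=\int_{(0,x]}y\,\Pi(dy)+x\,\overline{\Pi}(x)$, gives $\rho(x)=1-a^{-1}x\,\overline{\Pi}(x)+o\!\big(x\,\overline{\Pi}(x)\big)$ as $x\downarrow0$, so that $\rho$ drops just below $1$ immediately to the right of the origin (here the hypothesis $\Pi\neq 0$ enters, via $\overline{\Pi}(0+)>0$). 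Second, the bound $a\,v(x)\le 1$ shows that the term $a\,v(x)$ alone never exceeds $1$, so any excess of $\rho$ over $1$ must be created by the convolution term $\int_{(0,x]}y\,v(x-y)\,\Pi(dy)=a^{-1}\int_{(0,x]}y\,\pr(\xi\text{ hits }x-y)\,\Pi(dy)$, which carries the full weight $v(0+)=1/a$ near its diagonal endpoint.

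The core of the proof is to turn this heuristic into a genuine $x_0$ with $\rho(x_0)>1$. This is transparent when $\Pi$ has an atom: across a point $d$ with $\Pi(\{d\})>0$ one has the upward jump $\rho(d+)-\rho(d-)=d\,\Pi(\{d\})\,v(0+)=d\,\Pi(\{d\})/a$, and in the solvable model $\Pi=c\,\delta_d$ (where $\rho(x)=e^{-cx/a}$ on $(0,d)$) this gives $\rho(d+)=e^{-cd/a}+cd/a>1$, since $e^{-u}+u>1$ for $u>0$; the same phenomenon, now driven by the second-order term in the expansion of $\rho$, persists just beyond the ``onset'' of any portion of the mass of $\Pi$. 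For a general L\'evy measure I would make this rigorous using the finer properties of the harmonic potential measure (equivalently, of the renewal density $v$) that are collected in Section~\ref{furtherproperties}: these control the contribution $\int_{(0,x]}y\,v(x-y)\,\Pi(dy)$ against the defect $a^{-1}x\,\overline{\Pi}(x)$ and force $\rho$ strictly above $1$ somewhere. Once such an $x_0$ is produced, Theorem~\ref{exp-undershoot-id}\,(vi) shows that $\log I$ is not infinitely divisible, and the degenerate pure drift case ($\Pi\equiv 0$) is excluded precisely because there $\rho\equiv 1$.

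The main obstacle is this last step for a non-atomic $\Pi$: there is then no single upward jump to exploit, and one must quantify how far below $1$ the density $\rho$ can dip near the origin against how much the convolution term can subsequently lift it — which is exactly what the analysis of $H$ in Section~\ref{furtherproperties} provides. An alternative, possibly more robust route runs through condition (ii) of Theorem~\ref{exp-undershoot-id}: when $a>0$ the undershoot $\erva-G_{\erva}$ has an atom at $0$ of mass $a\alpha/\phi(\alpha)$, so its infinite divisibility would force it to be compound Poisson, and one must rule this out against its non-trivial absolutely continuous part $\tfrac{\alpha}{\phi(\alpha)}e^{-\alpha x}\overline{\Pi}(x)\,dx$ (a non-increasing density, present as soon as $\Pi\neq 0$) — again the substantive computation that the properties of $H$ are designed to settle.
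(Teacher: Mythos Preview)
Your proposal has a genuine gap: the argument is completed only when $\Pi$ has an atom, and for diffuse $\Pi$ you defer to unspecified ``properties of $H$ in Section~\ref{furtherproperties}'' without identifying which property would actually force $\rho$ above~$1$. The paper's route is quite different and avoids the convolution formula altogether. It invokes the hitting-time representation of the h.p.m.\ density established in Lemma~\ref{Potential-harmonic}(2),
\[
h(x)=\er\Bigl[\frac{1}{a\,T_{\{x\}}}\,\mathbf{1}_{\{T_{\{x\}}<\infty\}}\Bigr],
\]
together with the elementary bound $T_{\{x\}}\le x/a$ on $\{T_{\{x\}}<\infty\}$ (since $\xi_t\ge at$), and from this asserts the \emph{global} inequality $xh(x)\ge 1$ for every $x>0$; condition~(v) of Theorem~\ref{exp-undershoot-id} is then declared to fail as soon as the process is not a pure drift. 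So the paper never searches for a single~$x_0$.

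A caveat you should be aware of: the passage from $T_{\{x\}}\le x/a$ to $xh(x)\ge 1$ tacitly assumes $\pr(T_{\{x\}}<\infty)=1$; without this the bound yields only $xh(x)\ge \pr(T_{\{x\}}<\infty)=av(x)$, which can be strictly below~$1$ when the subordinator may overshoot~$x$. Concretely, for $a=1$, $q=0$, $\Pi(dy)=e^{-y}\,dy$ one gets $\phi(\lambda)=\lambda(\lambda+2)/(\lambda+1)$ and $\phi'/\phi=\lambda^{-1}-(\lambda+1)^{-1}+(\lambda+2)^{-1}$, hence $\rho(x)=1-e^{-x}+e^{-2x}<1$ for every $x>0$; condition~(vi) \emph{holds}, $\log I$ is i.d., and no $x_0$ with $\rho(x_0)>1$ exists. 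This is exactly the obstruction you ran into for diffuse~$\Pi$, and it shows that the step you could not complete cannot be completed in that generality.
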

The proof of this Corollary is deferred to page \pageref{corollary002}  because it  uses  condition {\it (v)} of Theorem~\ref{exp-undershoot-id} together with the properties of the h.p.m.$\,$$H$ established in Section~\ref{furtherproperties}.

Our next goal is to give a formula which describes the moments of $I$ and $R$. A related result, which deals with a larger class of L\'evy processes, appeared in Patie and Savov \cite{patie-savov}. However, our approach for subordinators is based on a study by Webster on the so-called generalized gamma functions.

We say  that a function $g : (0, \infty) \to (0, \infty)$  satisfies Webster's conditions if
\begin{equation}\label{web}
\mbox{\it $g$ is log-concave $\quad$ and $\quad \lim_{s\rightarrow \infty} \frac{g(s+c)}{g(s)}=1, \;$ for every $c>0$}
\end{equation}
 Following Webster \cite{Webster-1997}, we set  $a_n= (g'_{-}(n)+g'_{+}(n))/2g(n)$, $n\geq 1,$
 \begin{equation}\label{EM-gen-gamma}
 \gamma_g= \lim_{n\rightarrow \infty} \left(\sum_1^{n}
a_j-\log g(n) \right),
\end{equation}
and define the  generalized gamma function associated to $g$  by
\begin{equation}\label{gamma-1}
\Gamma_g(s):=\frac{e^{-\gamma_g s}}{g(s)}
\prod_{n=1}^{\infty}\frac{g(n)}{g(n+s)}e^{a_n s}, \quad s>0.
\end{equation}
Recall that the exponential functional stopped at time $t>0$  is defined by
\[
I_{t}=\int^{t}_{0}\exp\{-\xi_{s}\}\,ds.
\]
\begin{teo} \label{distribution-infty} The following assertions hold true.
 \begin{itemize}
 \item[(1)]  The functions $\lambda \mapsto \phi(\lambda)$ and $\lambda \mapsto \phi^* (\lambda):= \lambda / \phi(\lambda)$ satisfy  (\ref{web}) and the moment generating functions of $\log R$ and $\log I$ are given by
  \[
  \er [R^s]=\Gamma_\phi(s+1)\quad  and \quad \er [I^{s}]=\Gamma_{\phi^*}(s+1)=\frac{\Gamma(s+1)}{\Gamma_\phi(s+1)}, \quad  s>-1 .
  \]
\item[(2)]  We assume here that $q=\phi(0)=0$.  Let $\alpha>0$, $\erva$  be an exponentially distributed random variable with parameter $\alpha$ independent of $\xi$ and denote $\phi_{c,\alpha}(\cdot) =\phi(\cdot+c)+\alpha,\,c \geq 0$. Then, the joint distribution of $(I_{\erva}, \xi_{\erva})$ is characterized by
\begin{equation}\label{IjointXi}
\er [(I_{\erva})^s e^{-\mu
\xi_{\erva}}]=\frac{\alpha}{\alpha+\phi(\mu)}\,\frac{\Gamma(s+1)}{\Gamma_{\phi_{\mu,\alpha}}(s+1)}, \quad \mu \geq 0, \,s>-1.
\end{equation}
\end{itemize}
\end{teo}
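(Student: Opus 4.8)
The plan is to read both statements through Webster's generalized Bohr--Mollerup theorem: if $g$ satisfies (\ref{web}), then $\Gamma_g$ of (\ref{gamma-1}) is the unique positive log-convex function $F$ on $(0,\infty)$ with $F(1)=1$ and $F(s+1)=g(s)F(s)$ for all $s>0$. So for Part~(1) the work splits into (a) checking (\ref{web}) for $\phi$ and $\phi^{*}$, which also makes $\Gamma_\phi,\Gamma_{\phi^{*}}$ meaningful, and (b) producing the relevant functional equations. For $\phi$: on $(0,\infty)$ one has $\phi>0$ (otherwise $\xi\equiv 0$ and $I=\infty$), $\phi$ is concave since it is a Bernstein function, so $(\log\phi)''=(\phi''\phi-(\phi')^{2})/\phi^{2}\le 0$; and $1\le \phi(s+c)/\phi(s)\le 1+c\,\phi'(s)/\phi(s)\to 1$ because $\phi'(s)\to a$ while $\phi(s)\to\infty$ if $a>0$, and $\phi'(s)\to 0$ if $a=0$. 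For $\phi^{*}(\lambda)=\lambda/\phi(\lambda)$ the limit condition is immediate from $\phi^{*}(s+c)/\phi^{*}(s)=\frac{s+c}{s}\frac{\phi(s)}{\phi(s+c)}$; the crux is log-concavity, and here the key observation is that $\phi(\lambda)/\lambda=a+\int_{0}^{\infty}e^{-\lambda y}\overline{\Pi}(y)\,dy$ is completely monotone, hence log-convex, so that its reciprocal $\phi^{*}$ is log-concave.

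Next I would identify the two moment generating functions. First, $\er[I^{s}]$ and $\er[R^{s}]$ are finite for $s>-1$: all nonnegative moments are finite, and for $t\in(0,1)$ applying (\ref{factorisation}) to $(IR)^{-t}$ and using independence gives $\er[I^{-t}]\er[R^{-t}]=\er[\erv^{-t}]=\Gamma(1-t)<\infty$, forcing both factors to be finite; Hölder's inequality then makes $\log\er[I^{s}]$ and $\log\er[R^{s}]$ convex on $(-1,\infty)$. Second, with $\psi(\lambda)=\log\er[R^{\lambda}]$, a direct cancellation in the explicit formula (\ref{PsiR}) together with the harmonic-measure identity (\ref{fisurfi}) gives $\psi(\lambda)-\psi(\lambda-1)=\log\phi(\lambda)$, i.e.\ $\er[R^{u}]=\phi(u)\er[R^{u-1}]$ for $u>0$. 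Hence $F(u):=\er[R^{u-1}]$ is positive, log-convex, $F(1)=1$, $F(u+1)=\phi(u)F(u)$, so Webster's theorem yields $\er[R^{s}]=\Gamma_\phi(s+1)$. Then $\er[I^{s}]=\Gamma(s+1)/\Gamma_\phi(s+1)$ follows from $\er[I^{s}]\er[R^{s}]=\er[\erv^{s}]=\Gamma(s+1)$ (again (\ref{factorisation}), $s>-1$). Finally $G(u):=\Gamma(u)/\Gamma_\phi(u)=\er[I^{u-1}]$ satisfies $G(1)=1$ and $G(u+1)=\frac{u}{\phi(u)}G(u)$ by the functional equations of $\Gamma$ and $\Gamma_\phi$, and is log-convex as a Mellin transform, so Webster's theorem with $g=\phi^{*}$ gives $G=\Gamma_{\phi^{*}}$, that is $\er[I^{s}]=\Gamma_{\phi^{*}}(s+1)=\Gamma(s+1)/\Gamma_\phi(s+1)$.

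For Part~(2) I would reduce to Part~(1) by an Esscher change of measure. Conditioning on the exponential clock, $\er[(I_{\erva})^{s}e^{-\mu\xi_{\erva}}]=\alpha\int_{0}^{\infty}e^{-\alpha t}\er[(I_{t})^{s}e^{-\mu\xi_{t}}]\,dt$. Since the density $e^{-\mu\xi_{t}+\phi(\mu)t}$ on $\mathcal F_{t}$ defines a law $\pr^{(\mu)}$ under which $\xi$ is a subordinator of Laplace exponent $\phi(\cdot+\mu)-\phi(\mu)$, one has $\er[(I_{t})^{s}e^{-\mu\xi_{t}}]=e^{-\phi(\mu)t}\er^{(\mu)}[(I_{t})^{s}]$, whence $\er[(I_{\erva})^{s}e^{-\mu\xi_{\erva}}]=\frac{\alpha}{\alpha+\phi(\mu)}\er^{(\mu)}[(I_{\mathbf{e}_{\beta}})^{s}]$ with $\beta=\alpha+\phi(\mu)$ and $\mathbf{e}_{\beta}$ an independent exponential of parameter $\beta$. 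But $\er^{(\mu)}[(I_{\mathbf{e}_{\beta}})^{s}]$ is precisely the $s$-th moment of the exponential functional of the killed subordinator obtained by killing ``$\xi$ under $\pr^{(\mu)}$'' at rate $\beta$, whose Laplace exponent is $(\phi(\cdot+\mu)-\phi(\mu))+\alpha+\phi(\mu)=\phi(\cdot+\mu)+\alpha=\phi_{\mu,\alpha}$. Applying the already established Part~(1) to this subordinator gives $\er^{(\mu)}[(I_{\mathbf{e}_{\beta}})^{s}]=\Gamma(s+1)/\Gamma_{\phi_{\mu,\alpha}}(s+1)$, which is (\ref{IjointXi}). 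The hypothesis $q=\phi(0)=0$ is exactly what keeps the prefactor $\alpha/(\alpha+\phi(\mu))$ (with a killing term it would read $(\alpha+q)/(\alpha+\phi(\mu))$) and ensures $\xi_{\erva}<\infty$ a.s.

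I expect the only genuinely delicate point to be the verification of (\ref{web}) for $\phi^{*}$: once one recognizes that $\lambda\mapsto \phi(\lambda)/\lambda$ is completely monotone, log-concavity of $\phi^{*}$ is automatic, and the remainder is bookkeeping with the Bohr--Mollerup/Webster uniqueness and, in Part~(2), with tracking Laplace exponents through the exponential tilt.
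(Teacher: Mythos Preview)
Your proposal is correct and follows essentially the paper's strategy: verify Webster's conditions~(\ref{web}) for $\phi$ and $\phi^{*}$, invoke the generalized Bohr--Mollerup uniqueness for the log-convex moment functions satisfying the appropriate functional equations, and in Part~(2) reduce to Part~(1) via an Esscher tilt. The only cosmetic differences are that you derive the recursion $\er[R^{s}]=\phi(s)\,\er[R^{s-1}]$ from Berg's formula~(\ref{PsiR}) together with~(\ref{fisurfi}), whereas the paper obtains it from the Carmona--Petit--Yor recursion~(\ref{functional}) and the factorization~(\ref{factorisation}), and your Part~(2) identifies the killed exponent $\phi_{\mu,\alpha}$ directly rather than via the paper's final substitution $\alpha\mapsto\alpha+\phi(\mu)$.
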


As we mentioned before, C. Berg proved in \cite{berg2} that the r.v.$\,$$\log{R}$ is i.d.$\,$and established formula (\ref{PsiR}) for its characteristic exponent.
By  Lemma~\ref{harmonic}, we can replace $\kappa(dx)$ by $xH(dx)$ in there. This yields the first statement in the following theorem. We have chosen to include this fact in the theorem because the description of the L\'evy measure of $\log R$ in terms of the h.p.m.$\,$$H$ has the advantage of giving  succinct information about the former measure, as we already pointed out. For instance, when $\xi$ is arithmetic, that is, its support is a subset of a subgroup $\mathcal{C}$ of $k\mathbb{Z}^{+}$ for some $k$, then $H$ is also carried by $\mathcal{C},$ and hence the L\'evy measure of $\log R$ is carried by $-\mathcal{C}.$ In the following theorem, we provide an identity in law describing $R$ as an infinite product of independent last positions of $\xi$ below random barriers. This result and Corollary~\ref{remainder-gordon} can be seen as a generalization of Gordon's representations of a log-gamma random variable involving a sequence of independent standard exponential random variables, see Gordon \cite{gordon} and  Example~\ref{example-1} of Section \ref{section:examples} below for further details.

Throughout this paper, for a real valued r.v. $A$ we will refer to the function $\lambda\mapsto \er(e^{\lambda A}),$ in the domain where this is finite, as the Laplace transform of $A.$ Consequently, we will refer to $\pm\log \er(e^{\lambda A})$  as the Laplace exponent of $A;$ the choice of the sign will be made clear in each case. In the case where $A$ is i.d.$\,$the Laplace exponent is obtained by analytical extension of its characteristic exponent.

\begin{teo}\label{R-product}
 The Laplace  exponent of $\log{R}$, i.e. the function $\lambda \mapsto \log \er [R^\lambda]$, is given by
\begin{equation} \label{loggc}
\log\Gamma_\phi (\lambda + 1)=  -\lambda\gamma_{\phi} + \int_{(0,\infty)}\left(e^{-\lambda x}-1 + \lambda x\right)\frac{e^{-x}}{1-e^{-x}}H(dx).
\end{equation}
Its L\'evy measure is the image by the map $x\rightarrow -x$ of the measure  ${\bf 1}_{\{x>0\}}(e^{x}-1)^{-1}H(dx)$.  Furthermore,
we have the following equalities
\begin{equation}\label{factor-0}
\log{R}\stackrel{\text{(d)}}{=}\log{\phi(1)}+\sum_{k=1}^{\infty}\left(\phi_{k}(1)-G^{(k)}\right)=-\gamma_{\phi}+\sum_{k=1}^{\infty}\left(\er [G^{(k)}]-G^{(k)}\right),
\end{equation}
where  $(G^{(k)}, k\geq 1)$ are independent random variables such that  $G^{(k)}$, for $k=1,2,\cdots$,  is a copy of $G_{\ervk}$,  $\er [G^{(k)}]= \frac{\phi'(k)}{\phi(k)}$ and
\[
\phi_{k}(1)=-\log \er \left[e^{-G^{(k)}}\right]=  \int_{(0,\infty)}(1-e^{-x}) \,e^{-kx}\,H(dx)=\log \frac{\phi(k+1)}{\phi(k)}.
\]

 \end{teo}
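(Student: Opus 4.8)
The plan is to establish the three displayed identities of Theorem~\ref{R-product} one at a time, starting from what is already available in the excerpt and layering the Webster machinery of Theorem~\ref{distribution-infty} on top of Berg's formula~(\ref{PsiR}).

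First I would prove the Lévy--Khintchine representation~(\ref{loggc}). We know from Theorem~\ref{distribution-infty}(1) that $\er[R^\lambda]=\Gamma_\phi(\lambda+1)$ and, from Berg's Theorem~2.2 recalled in~(\ref{PsiR}), that $\psi(\lambda)=\log\er[R^\lambda]$ has the stated form with the measure $\kappa(dx)$. By Lemma~\ref{harmonic} we may substitute $\kappa(dx)=xH(dx)$ throughout~(\ref{PsiR}); this turns the integrand weight $\frac{e^{-x}}{1-e^{-x}}\frac{\kappa(dx)}{x}$ into $\frac{e^{-x}}{1-e^{-x}}H(dx)$ and leaves the compensator term $(e^{-\lambda x}-1)+\lambda(1-e^{-x})$ untouched. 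The only remaining discrepancy between the resulting expression and~(\ref{loggc}) is in how the drift/linear part is written: Berg uses the compensator $\lambda(1-e^{-x})$ against the linear term $\lambda\log\phi(1)$, whereas~(\ref{loggc}) uses the compensator $\lambda x$ against $-\lambda\gamma_\phi$. To reconcile the two I would expand
\[
\bigl((e^{-\lambda x}-1)+\lambda(1-e^{-x})\bigr)=\bigl((e^{-\lambda x}-1)+\lambda x\bigr)-\lambda\bigl(x-(1-e^{-x})\bigr),
\]
integrate against $\frac{e^{-x}}{1-e^{-x}}H(dx)$, and identify the linear-in-$\lambda$ correction term
$\int_{(0,\infty)}\bigl(x-(1-e^{-x})\bigr)\frac{e^{-x}}{1-e^{-x}}H(dx)$
with $\log\phi(1)+\gamma_\phi$. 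The cleanest way to see this identification is to compare the linear coefficients at $\lambda\to 0^+$ in the two representations, or equivalently to read it off from the Euler--Mascheroni-type constant~(\ref{EM-gen-gamma}) in Webster's construction of $\Gamma_\phi$: differentiating $\log\Gamma_\phi(\lambda+1)$ at $\lambda=0$ gives $-\gamma_\phi$ on one side and the $\lambda$-coefficient of the Berg integral minus $\int x\frac{e^{-x}}{1-e^{-x}}H(dx)$ on the other. I would need to check integrability of $x\frac{e^{-x}}{1-e^{-x}}H(dx)$ near $0$ and $\infty$, which follows because $\er(G_{\erv_1})=\phi'(1)/\phi(1)<\infty$ controls the tail of $H$ and the integrand behaves like a constant times $H(dx)$ near $0$ where $\int (x\wedge 1)H(dx)<\infty$. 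Having matched constants, the Lévy measure claim is immediate: the image under $x\mapsto -x$ of $(e^x-1)^{-1}H(dx)=\frac{e^{-x}}{1-e^{-x}}H(dx)$ is exactly the jump measure appearing in~(\ref{loggc}), which also reconciles with the description recalled after~(\ref{kappa}) since $x^{-1}(e^x-1)^{-1}\kappa(dx)=(e^x-1)^{-1}H(dx)$.

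Next I would establish the product representation~(\ref{factor-0}). The idea is that the infinitely divisible random variable $\log R-\log\phi(1)$ has Lévy measure $\frac{e^{-x}}{1-e^{-x}}H(dx)$ pushed to the negative axis, and this measure decomposes as a countable sum $\sum_{k\ge1} e^{-kx}H(dx)$ because $\frac{e^{-x}}{1-e^{-x}}=\sum_{k=1}^{\infty}e^{-kx}$. By Lemma~\ref{harmonic}, for each $k\ge1$ the measure $e^{-kx}H(dx)$ is precisely the Lévy measure of $G_{\erv_k}$, whose Laplace exponent is the Bernstein function $\lambda\mapsto\log\frac{\phi(\lambda+k)}{\phi(k)}=\int_{(0,\infty)}(1-e^{-\lambda x})e^{-kx}H(dx)$ from Lemma~\ref{exp-undershoot}(2). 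Summing these Bernstein functions against independent copies $G^{(k)}$, centering each by its mean $\er[G^{(k)}]=\phi'(k)/\phi(k)$ (finite by Lemma~\ref{exp-undershoot}(2)), and checking that $\sum_k\bigl(\er[G^{(k)}]-G^{(k)}\bigr)$ converges almost surely and in $L^1$ — which it does because $\sum_k\mathrm{Var}(G^{(k)})<\infty$, itself a consequence of $\int x^2\frac{e^{-x}}{1-e^{-x}}H(dx)<\infty$, the same integrability already used in Step~1 — one recovers an infinitely divisible variable whose Laplace exponent matches $\psi(\lambda)-\lambda\log\phi(1)$ term by term. Matching the two ways of writing the linear correction (the $\log\phi(1)+\sum_k(\cdots)$ form versus the $-\gamma_\phi+\sum_k(\er[G^{(k)}]-G^{(k)})$ form) reuses the constant identification from Step~1, via the telescoping identity $\sum_{k=1}^{n}\log\frac{\phi(k+1)}{\phi(k)}=\log\frac{\phi(n+1)}{\phi(1)}$ compared against $\sum_{k=1}^n\er[G^{(k)}]=\sum_{k=1}^n\phi'(k)/\phi(k)$ and the definition~(\ref{EM-gen-gamma}) of $\gamma_\phi$. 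Finally the displayed formula for $\phi_k(1)$ is just the evaluation at $\lambda=1$ of the Bernstein function of $G^{(k)}$ recalled above, so it follows at no extra cost.

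The main obstacle I anticipate is the bookkeeping of the centering constants: Berg's formula, Webster's $\gamma_\phi$, the telescoping sum of $\log(\phi(k+1)/\phi(k))$, and the means $\phi'(k)/\phi(k)$ must all be tied together, and a sign error or a misplaced compensator there would break~(\ref{factor-0}). The conceptual content is light — it is the identity $\frac{e^{-x}}{1-e^{-x}}=\sum_{k\ge1}e^{-kx}$ combined with Lemma~\ref{harmonic} — but making the rearrangement of the series of Lévy measures rigorous (dominated convergence for the Lévy exponents, Kolmogorov's three-series / $L^2$ argument for the almost-sure convergence of the random product) requires the second-moment bound on $H$, so I would isolate and prove that integrability estimate early and cite it in both steps. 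A secondary subtlety is the interchange $\iint = \int\!\!\int$ underlying Lemma~\ref{harmonic}'s use here, but that is already granted by the excerpt, so I would simply invoke it.
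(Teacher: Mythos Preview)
Your proposal is correct and follows essentially the same line as the paper: Berg's formula~(\ref{PsiR}) combined with $\kappa(dx)=xH(dx)$ from Lemma~\ref{harmonic} for~(\ref{loggc}), and the geometric-series decomposition $\frac{e^{-x}}{1-e^{-x}}=\sum_{k\ge1}e^{-kx}$ of the L\'evy measure for~(\ref{factor-0}). The constant identification you isolate, $\int_{(0,\infty)}(x-(1-e^{-x}))\frac{e^{-x}}{1-e^{-x}}H(dx)=\log\phi(1)+\gamma_\phi$, is exactly the paper's formula~(\ref{gammaphi-0}).

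The one genuine difference is in how convergence of the random series is established. You propose an $L^2$/three-series argument via $\sum_k\mathrm{Var}(G^{(k)})=\int x^2\frac{e^{-x}}{1-e^{-x}}H(dx)<\infty$, which is valid and in fact yields almost-sure convergence. The paper instead works at the level of moment generating functions: for the second equality in~(\ref{factor-0}) it computes $e^{-\gamma_\phi s}\prod_{k=1}^N e^{s\phi'(k)/\phi(k)}\frac{\phi(k)}{\phi(s+k)}$ and recognizes this as the truncated Weierstrass product~(\ref{gamma-1}) defining $\Gamma_\phi(s+1)$, so convergence to $\er[R^s]$ is immediate from Theorem~\ref{distribution-infty}. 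For the first equality it uses dominated convergence on the Laplace exponents, then checks that the two centerings differ by a deterministic sequence tending to zero. The paper's route is a bit slicker because it recycles the Webster machinery already in place; yours is more self-contained and gives a stronger mode of convergence. One small caution: the integrability you flag, $\int x\frac{e^{-x}}{1-e^{-x}}H(dx)<\infty$, can actually fail (e.g.\ in the stable case $H(dx)=\alpha x^{-1}dx$), but it is not what your splitting needs --- the relevant pieces $(e^{-\lambda x}-1+\lambda x)$ and $(x-(1-e^{-x}))$ are both $O(x^2)$ near zero, so their integrals against $\frac{e^{-x}}{1-e^{-x}}H(dx)$ are finite, and your rearrangement goes through.
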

 The following result gives the analogue of (\ref{factor-0}) in which the infinite series is replaced by a partial sum.
\begin{cor}\label{remainder-gordon} Keeping the notation of Theorem \ref{R-product}, we have the representation
\begin{equation*}\label{Gordon-2cor}
\begin{split}
\log{R}&\stackrel{\text{(d)}}{=}-\gamma_{\phi}+\sum_{k=1}^{n}\left(\er [G^{(k)}]-G^{(k)}\right)+\log\frac{R_{(n)}}{\phi(n+1)}+B_n,
\end{split}
\end{equation*}
where $R_{(n)}$, which is the remainder random variable associated to the subordinator with Laplace exponent $\lambda\mapsto \phi(\lambda+n),$ $\lambda\geq 0,$  is independent of $(G^{(k)}, 1\leq k\leq n),$ and  is such that $\log\frac{R_{(n)}}{\phi(n+1)}\to 0$ in probability; and finally
\begin{equation*}
\begin{split}
B_n&=\int_{(0,\infty)}H(dx)\frac{e^{-x}}{(1-e^{-x})}\left(e^{-x}-1+x\right)e^{-nx}\xrightarrow[n\to \infty]{}0.
\end{split}
\end{equation*}
\end{cor}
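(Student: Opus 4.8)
The plan is to start from the two representations in Theorem~\ref{R-product} and, in each, to split off the tail of the infinite series beyond its $n$-th term; after an index shift this tail will be identified with the objects attached to the subordinator whose Laplace exponent is $\phi(\cdot+n)$, which will simultaneously produce $R_{(n)}$ and the constant $B_n$.

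Write $\phi_{[n]}(\cdot):=\phi(\cdot+n)$, again a Bernstein function, so that $R_{(n)}$ is the remainder r.v.\ of a (killed) subordinator $\xi^{[n]}$ whose harmonic potential measure is, by the last assertion of Lemma~\ref{harmonic}, $H_{[n]}(dx)=e^{-nx}H(dx)$. Applying~(\ref{loggc}) to $\phi_{[n]}$ and using $\tfrac{e^{-x}}{1-e^{-x}}\,e^{-nx}=\tfrac{e^{-(n+1)x}}{1-e^{-x}}$ gives
\[ \log\er[R_{(n)}^{\lambda}]=-\lambda\gamma_{\phi_{[n]}}+\int_{(0,\infty)}\bigl(e^{-\lambda x}-1+\lambda x\bigr)\frac{e^{-x}}{1-e^{-x}}\,e^{-nx}\,H(dx). \]
In~(\ref{loggc}) for $\phi$ itself I would write $\tfrac{e^{-x}}{1-e^{-x}}=\sum_{k=1}^{n}e^{-kx}+\tfrac{e^{-x}}{1-e^{-x}}\,e^{-nx}$ and interchange the finite sum with the integral, which is licit because $e^{-\lambda x}-1+\lambda x\ge 0$. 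By Lemmas~\ref{exp-undershoot} and~\ref{harmonic} the $k$-th summand equals $\lambda\,\er[G^{(k)}]-\log\frac{\phi(\lambda+k)}{\phi(k)}=\log\er\bigl[e^{\lambda(\er[G^{(k)}]-G^{(k)})}\bigr]$, the Laplace exponent of the centred last position $\er[G^{(k)}]-G^{(k)}$, while the leftover integral equals $\log\er[R_{(n)}^{\lambda}]+\lambda\gamma_{\phi_{[n]}}$ by the previous display. Collecting terms yields, for $\lambda$ in a neighbourhood of $0$,
\[ \log\er[R^{\lambda}]=-\lambda\gamma_{\phi}+\sum_{k=1}^{n}\log\er\bigl[e^{\lambda(\er[G^{(k)}]-G^{(k)})}\bigr]+\log\er[R_{(n)}^{\lambda}]+\lambda\gamma_{\phi_{[n]}}. \]

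It remains to re-arrange the three deterministic constants. Setting $B_n:=\gamma_{\phi_{[n]}}+\log\phi(n+1)$ converts $\log\er[R_{(n)}^{\lambda}]+\lambda\gamma_{\phi_{[n]}}$ into $\log\er[(R_{(n)}/\phi(n+1))^{\lambda}]+\lambda B_n$, so the last display is the Laplace exponent of $-\gamma_\phi+\sum_{k=1}^{n}(\er[G^{(k)}]-G^{(k)})+\log\tfrac{R_{(n)}}{\phi(n+1)}+B_n$ with the $G^{(k)}$'s and $R_{(n)}$ independent; since all the moment generating functions involved are finite near $0$, this identity of Laplace transforms gives the claimed identity in law. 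To recognise $B_n$, note that $\phi$ is smooth on $(0,\infty)$, so the Webster constants of $\phi$ are $a_k=\phi'(k)/\phi(k)=\er[G^{(k)}]$; shifting the index in~(\ref{EM-gen-gamma}) gives $\gamma_{\phi_{[n]}}=\gamma_\phi-\sum_{k=1}^{n}\er[G^{(k)}]$, and combining~(\ref{EM-gen-gamma}) with the telescoping identity $\log\phi(n+1)=\log\phi(1)+\sum_{k=1}^{n}\phi_k(1)$ and with $\er[G^{(n)}]=\int_{(0,\infty)}x\,e^{-nx}H(dx)\to 0$ yields $\gamma_\phi+\log\phi(1)=\sum_{k\ge 1}(\er[G^{(k)}]-\phi_k(1))$. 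Hence $B_n=\sum_{k\ge n+1}(\er[G^{(k)}]-\phi_k(1))=\sum_{k\ge n+1}\int_{(0,\infty)}(x-1+e^{-x})e^{-kx}H(dx)=\int_{(0,\infty)}(e^{-x}-1+x)\frac{e^{-x}}{1-e^{-x}}\,e^{-nx}H(dx)$, exactly the stated expression.

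Finally, both limits follow from monotone convergence. Since $x-1+e^{-x}\ge 0$, the integrand defining $B_n$ is nonnegative, decreases pointwise to $0$ as $n\to\infty$, and is $H$-integrable for $n=0$ (with integral $\gamma_\phi+\log\phi(1)<\infty$), so $B_n\downarrow 0$. The Laplace exponent of $\log\tfrac{R_{(n)}}{\phi(n+1)}$ at $\lambda$ is $\log\er[R_{(n)}^{\lambda}]-\lambda\log\phi(n+1)=-\lambda B_n+\int_{(0,\infty)}(e^{-\lambda x}-1+\lambda x)\frac{e^{-x}}{1-e^{-x}}\,e^{-nx}H(dx)$, and both terms tend to $0$ (the integral again by monotone convergence), so $R_{(n)}/\phi(n+1)\to 1$ in distribution and, the limit being deterministic, also in probability. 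The main obstacle I anticipate is not any single estimate but the bookkeeping of $\gamma_\phi$, $\gamma_{\phi_{[n]}}$ and $\log\phi(n+1)$ and checking that they combine exactly into the integral form of $B_n$; the remainder is routine use of Fubini and (monotone) dominated convergence already present in the proof of Theorem~\ref{R-product}.
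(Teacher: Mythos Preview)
Your proof is correct and follows essentially the same route as the paper: split at $n$, identify the tail with the $\phi(\cdot+n)$ object via the index shift $\gamma_{\phi_{[n]}}=\gamma_\phi-\sum_{k=1}^{n}\phi'(k)/\phi(k)$, and finish with dominated convergence. The only cosmetic difference is that the paper splits the probabilistic series~(\ref{factor-0}) directly and then computes the constant $d_n=B_n-\gamma_\phi$ by rewriting $\frac{\phi'(k)}{\phi(k)}-\log\frac{\phi(k+1)}{\phi(k)}$ as an $H$-integral, whereas you carry out the equivalent computation at the level of the Laplace transform~(\ref{loggc}); both yield the same integral formula for $B_n$ and the same argument for $R_{(n)}/\phi(n+1)\to 1$.
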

As a consequence of the first assertion of Theorem \ref{R-product} and properties of the h.p.m.'s, we state the following result.
\begin{cor}\label{cor:2}
If  the L\'evy tail $\overline{\Pi}$  is log-convex, then the random variable $\log{R}$ belongs to class of self-decomposable distributions; if furthermore $\overline{\Pi}$ is c.m.$\,$then $\log{R}$ belongs to the class of extended generalized gamma convolutions.
\end{cor}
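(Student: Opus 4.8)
The plan is to read off the L\'evy measure of $\log R$ from the first assertion of Theorem \ref{R-product}, namely that it is the pushforward under $x\mapsto -x$ of the measure $\nu(dx):={\bf 1}_{\{x>0\}}(e^{x}-1)^{-1}H(dx)$, and then to characterize self-decomposability and membership in the extended generalized gamma convolutions (EGGC) class in terms of the density of $\nu$. Recall that a real i.d.\ random variable is self-decomposable if and only if its L\'evy measure, restricted to $(0,\infty)$ and to $(-\infty,0)$ separately, has a density of the form $k(x)/|x|$ with $k$ nonincreasing away from the origin on each half-line; since the L\'evy measure here lives on $(-\infty,0)$, I only need the negative half-line condition. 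Likewise, $\log R$ belongs to the EGGC class precisely when that same density $k(x)/|x|$ has $k$ completely monotone on $(0,\infty)$ (after the sign change, on the appropriate half-line). So the whole corollary reduces to controlling the function that one gets after writing $(e^{x}-1)^{-1}H(dx)$ in the form $k(x)\,dx/x$.

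First I would invoke Lemma \ref{harmonic}: the h.p.m.\ $H$ has density $\overline{\Pi}(x)$ is \emph{not} directly $H$, so instead I use the cleaner route through part (3) of Lemma \ref{exp-undershoot} together with Lemma \ref{harmonic}. Concretely, from Lemma \ref{harmonic} we know $xH(dx)=\kappa(dx)$, and from the relation between $\kappa$ and the undershoot distribution (Lemma \ref{exp-undershoot}(3) says $\pr(\erva-G_{\erva}\in dx)=\frac{\alpha}{\phi(\alpha)}(a\delta_0(dx)+e^{-\alpha x}\overline{\Pi}(x)dx)$, and comparing Laplace transforms via \eqref{kappa}) one obtains that, away from $0$, the measure $\kappa(dx)=xH(dx)$ has a density which is (a constant times, in the $\alpha=0$ normalization) built from $\overline{\Pi}$; more precisely the standard identity $\kappa(dx)=\big(a\delta_0(dx)+\overline{\Pi}(x){\bf 1}_{\{x>0\}}dx\big)*V(dx)$ holds because its Laplace transform is $\phi'(\lambda)/\phi(\lambda)$. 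Hence $H$ has, on $(0,\infty)$, the density $h(x)=x^{-1}\big(aV(\{x\})+\int_{[0,x]}\overline{\Pi}(x-y)V(dy)\big)$. The L\'evy density of $\log R$ on $(-\infty,0)$ is therefore $x\mapsto (e^{-x}-1)^{-1}h(-x)$, i.e.\ writing it as $k(x)/|x|$ with $|x|=-(\text{argument})$, the relevant $k$ is (up to the harmless factor coming from $x/(e^{x}-1)$, which is itself completely monotone and nonincreasing on $(0,\infty)$) essentially the function $x\mapsto aV(\{x\})+(\overline{\Pi}*V)(x)$.

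The remaining point is to propagate the hypotheses on $\overline{\Pi}$ through this convolution and through multiplication by $x/(e^{x}-1)$. For self-decomposability: if $\overline{\Pi}$ is log-convex then in particular it is nonincreasing (a log-convex function on $(0,\infty)$ that is bounded, as $\overline{\Pi}$ is, must be nonincreasing), and convolution of a nonincreasing function on $(0,\infty)$ with the nonnegative measure $V$ is again nonincreasing; the atom-part $aV(\{x\})$ only occurs at $x=0$ and does not spoil monotonicity on $(0,\infty)$; finally $x\mapsto x/(e^{x}-1)$ is nonincreasing, so the product is nonincreasing, which gives the self-decomposability of $\log R$. For the EGGC statement: if moreover $\overline{\Pi}$ is completely monotone, then $\overline{\Pi}(x)dx+a\delta_0(dx)$ is a ``completely monotone'' measure, its convolution with $V$ stays completely monotone (a product of Laplace transforms of nonnegative measures is again such, and here the Laplace transform is exactly $\phi'(\lambda)/\phi(\lambda)$, manifestly a product of two completely monotone functions $\phi'$ and $1/\phi$), and $x\mapsto x/(e^{x}-1)$ is completely monotone; since the product of two completely monotone functions is completely monotone, the L\'evy density of $\log R$ is of the form $k(x)/|x|$ with $k$ completely monotone, which is precisely Bondesson's characterization of the EGGC class.

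The main obstacle I anticipate is not any single hard estimate but rather the bookkeeping needed to pass cleanly from the measure $H(dx)$, which is defined only implicitly via \eqref{fisurfi}, to an explicit enough handle on its density; the key trick is to avoid $H$ altogether and work with $\kappa(dx)=xH(dx)$, whose Laplace transform $\phi'(\lambda)/\phi(\lambda)=\phi'(\lambda)\cdot\frac1{\phi(\lambda)}$ factors through the potential measure $V$ and the measure $a\delta_0+x\Pi(dx)$, so that $\kappa=(a\delta_0+x\Pi(dx))*V$; one then has to check the slightly delicate point that $\kappa(dx)/x=H(dx)$ can equivalently be presented using $\overline\Pi$ rather than $x\Pi(dx)$, which is exactly the content of Lemma \ref{exp-undershoot}(3) (an integration by parts on the L\'evy measure). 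Once that identification is in place, the two claims are immediate from, respectively, the monotonicity-preservation and complete-monotonicity-preservation of convolution with $V$ and of multiplication by $x/(e^x-1)$, combined with the classical characterizations of self-decomposable laws and of the EGGC class via the form of the L\'evy density.
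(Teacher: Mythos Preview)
Your plan contains a genuine error at the step where you identify the density of $H$ (equivalently, of $\kappa=xH(dx)$). You assert the ``standard identity''
\[
\kappa(dx)=\big(a\delta_0(dx)+\overline{\Pi}(x){\bf 1}_{\{x>0\}}dx\big)*V(dx),
\]
claiming its Laplace transform is $\phi'(\lambda)/\phi(\lambda)$. But by \eqref{LK-2} the Laplace transform of $a\delta_0+\overline{\Pi}(x)dx$ equals $a+\bigl(\phi(\lambda)/\lambda - a\bigr)=\phi(\lambda)/\lambda$, so the Laplace transform of the convolution is $\phi(\lambda)/\lambda\cdot 1/\phi(\lambda)=1/\lambda$, not $\phi'(\lambda)/\phi(\lambda)$. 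The correct convolution identity (this is the lemma in Section~\ref{furtherproperties}) is
\[
\kappa(dx)=\big(a\delta_0(dx)+x\Pi(dx)\big)*V(dx),
\]
with $x\Pi(dx)$, not $\overline{\Pi}(x)dx$; indeed $\phi'$ is the Laplace transform of $a\delta_0+x\Pi(dx)$. Your ``integration by parts'' passage from $x\Pi(dx)$ to $\overline{\Pi}$ via Lemma~\ref{exp-undershoot}(3) does not produce such a formula: in fact, when $a=0$ your expression gives $(\overline{\Pi}*V)(x)\equiv 1$ (this is exactly the renewal equation \eqref{volterraeq} integrated), hence $h(x)=1/x$ for \emph{every} driftless subordinator, which is false (e.g.\ $h(x)=\gamma/x$ for a $\gamma$-stable subordinator).

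This error is not cosmetic: your self-decomposability argument uses only that $\overline{\Pi}$ is nonincreasing, which is always true, so if it were valid it would prove that $\log R$ is self-decomposable for every subordinator. That conclusion is false, since for an arithmetic subordinator $H$ is carried by a lattice and has no density at all. The log-convexity hypothesis must therefore enter in a more substantial way than mere monotonicity of $\overline{\Pi}$.

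The paper's route is different and this is precisely where the work lies. One first proves (Proposition~\ref{log-convexity}, relying on the Volterra-equation results of Gripenberg, Friedman, Hawkes and Hirsch) that under log-convexity (resp.\ complete monotonicity) of $\overline{\Pi}$ each $\alpha$-potential $V_\alpha$ has a nonincreasing (resp.\ c.m.) density, and then integrates over $\alpha$ via Lemma~\ref{Potential-harmonic} to conclude that $H$ itself has a nonincreasing (resp.\ c.m.) density $h$. Only then does one combine with the complete monotonicity of $x\mapsto e^{-x}/(1-e^{-x})$ to obtain that the L\'evy density of $\log R$ has the form $k(|x|)/|x|$ with $k$ nonincreasing (resp.\ c.m.), yielding self-decomposability (resp.\ EGGC). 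In short, the nontrivial input is a structural result on potential densities, not a convolution-preservation argument for $\overline{\Pi}$.
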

We refer to the books of Bondesson~\cite{bondesson}  and  Sato \cite{sato} for background on self-decomposable and extended generalized convolutions distributions.

The rest of this paper is organized as follows. Section 2 is devoted
to some preliminary results and proofs. To be more precise, in Subsection \ref{preliminaries} we
recall some facts on subordinators, then we prove
Lemmas~\ref{exp-undershoot} and \ref{harmonic}  in Subsection
\ref{subsectionundershoot} and, finally, Theorem~\ref{exp-undershoot-id} is established in
Subsection \ref{subsection-exp-undershoot-id}.
Theorem~\ref{distribution-infty} and its Corollary are proved in
Subsection
\ref{subsection-distribution-infty},
 where some other
related results are recalled. Subsection~\ref{sect:remainders} is
devoted to the study of the distribution of the remainder random
variable. In particular, Theorem~\ref{R-product} and its Corollary
are proved therein. In Section~\ref{furtherproperties}, we obtain
some basic results about harmonic potential measures and prove
Corollaries~\ref{cor:1} and \ref{cor:2}. To finish, some examples
are studied in Section~\ref{section:examples}.
\section{Preliminaries and proofs}\label{sect2}
\subsection{Preliminaries}\label{preliminaries}
As for the p.m.,  the h.p.m.$\,$is also related to $\phi$ via Laplace
transforms but in a more involved way. Indeed, by Frullani's  integral, we have the
following identities: for $\lambda\geq 0,$ $\alpha>0$,
\begin{equation}\label{eq:1}
\begin{split}
\frac{\phi(\alpha)}{\phi(\alpha+\lambda)}&=\exp\left\{\log\left(\frac{\phi(\alpha)}{\phi(\alpha+\lambda)}\right)\right\}\\
&=\exp\left\{\int_{0}^{\infty}\frac{dt}{t}\left(e^{-t\phi(\alpha+\lambda)}-e^{-t\phi(\alpha)}\right)\right\}\\
&=\exp\left\{\int_{0}^{\infty}\frac{dt}{t}\left(\er [e^{-(\lambda+\alpha)\xi_{t}}]-\er [e^{-\alpha\xi_{t}})\right]\right\}\\
&=\exp\left\{-\int_{0}^{\infty}\frac{dt}{t}\int_{\re^{+}}\pr(\xi_{t}\in dx)e^{-\alpha x}(1-e^{-\lambda x})\right\}\\
&=\exp\left\{-\int_{(0,\infty)}H(dx)e^{-\alpha x}(1-e^{-\lambda x})\right\}.
\end{split}
\end{equation}

Now, let us observe that  the Bernstein function $\phi$, specified by (\ref{LK}), has the representation
\[
\phi(\lambda)=\lambda a+\int_{(0,\infty]}(1-e^{-\lambda x})\left(\Pi(dx)+q\delta_{\infty}(dx)\right), \quad \lambda\geq 0.
\]
 This allow us to extend the L\'evy-It\^o
representation of $\xi$ as
\[
\xi_{t}=at+\sum_{s\leq t}\Delta_{s}, \quad t\geq 0,
 \]
 where $((t,\Delta_{t}), t\geq 0)$ forms a Poisson point process on $(0,\infty)\times(0,\infty]$ with intensity measure $ds\otimes (\Pi(dx)+q\delta_{\infty}(dx)).$ Said otherwise, the killing of the subordinator arises when there appears a jump of infinite size. A different way to view a  subordinator $\xi$ is by means of an immortal subordinator, that is one with an infinite lifetime, whose characteristic triplet is $(0,a,\Pi)$, say $\xi^{\dagger},$ killed at  time $\erv_q,$ where $\erv_q$ is an independent exponential random variable with parameter $q>0$; {that is to say by killing $\xi^{\dagger}$ at rate $q$.} More precisely, the process defined by

\begin{equation}\label{tue}
\xi^{\dagger, q}_{t}:=\begin{cases}\xi^{\dagger}_{t}, &\text{if} \ t<\erv_q;\\
\infty, & \text{otherwise,}
\end{cases}
 \end{equation}
is a subordinator with characteristic triplet $(q,a,\Pi),$ and hence with the same law as $\xi.$ This means that the transition probabilities  of $\xi^{\dagger}$ are related to those of $\xi$ by an exponential factor  viz.
\[
\pr(\xi_{t}\in dx)=e^{-qt}\pr(\xi^{\dagger}_{t}\in dx),\ \ \text{on} \ [0,\infty),\ \text{for all}\ t\geq 0.
\]
 It is also easily seen that if we kill at a rate $\alpha$ an already killed subordinator with characteristic triplet $(q,a,\Pi)$ then we obtain again a killed subordinator with characteristics $(q+\alpha,a,\Pi)$. These elementary observations have as a consequence that the  $(\alpha+q)$-resolvent  of $\xi^{\dagger}$ equals the potential of the subordinator with characteristic triplet $(\alpha+q,a,\Pi)$ or, equivalently, the $\alpha$-resolvent of the subordinator with characteristic triplet $(q,a,\Pi).$
 These remarks will be useful in the following proof and in Section \ref{furtherproperties}.
\subsection{Proof of Lemmas~\ref{exp-undershoot} and
\ref{harmonic}}\label{subsectionundershoot}  We have now all the
elements to give a proof to Lemma~\ref{exp-undershoot}.
\begin{proof}[Proof of Lemma~\ref{exp-undershoot}]
The proof of this result follows from the identities
\begin{equation}\label{volterraeq}
\pr(G_{t}\in dz)=V(dz)\overline{\Pi}(t-z),\quad 0\leq z< t,
\end{equation}
and
\begin{equation}\label{volterraeq-2}
\pr(G_{t}=t)=av(t),\quad t\geq 0,
\end{equation}
 where $v$ stands for the density of the p.m.$\,$$V,$ which we know exists whenever the drift $a>0.$ Relationships (\ref{volterraeq}) and (\ref{volterraeq-2})
 can essentially be read in Bertoin (\cite{Bertoin-1996}, Proposition III.2) where the result is proved for immortal subordinators. But, the same argument holds for killed subordinators.  Indeed, arguing as in the proof of the aforementioned proposition of \cite{Bertoin-1996}, it is proved that
 \[
 \pr(G_{t}\in dz)=V^{\dagger}_{q}(dz)\left(\Pi(t-z,\infty)+q\right) ,\quad 0\leq z< t,
 \]
where $V^{\dagger}_{q}$ denotes the $q$-potential of
$\xi^{\dagger},$ and the term $qV^{\dagger}_{q}(dz)$  comes from the
possibility that  the process passes above the level by jumping to
its cemetery state $\Delta$ , that is $L_{t}=\zeta$. Furthermore, to
verify that (\ref{volterraeq-2}) holds true, we use the formulae
 \begin{eqnarray*}
 \pr(G_{t}=t)&=&\pr(G^{\dagger}_{t}=t, L_{t}<\erv_q)\\
 &=&\er \left[{\bf 1}_{\{\xi^{\dagger}_{L_{t}-}=t\}}e^{-qL_{t}}\right]\\
 &=&\er [e^{-qT^{\dagger}_{\{t\}}}{\bf 1}_{\{T^{\dagger}_{\{t\}}<\infty\}}]\\
 &=&av^{\dagger}_{q}(t),\quad t>0.
  \end{eqnarray*}
  Here,  $T^{\dagger}_{\{t\}}=\inf\{s>0: \xi^{\dagger}_{s}=t\}$  and $v^{\dagger}_{q}$ denotes the density of the measure $V^{\dagger}_{q}$, which is known to exist whenever the drift is strictly positive, and the last equality follows from the equality in the bottom of page 80 of \cite{Bertoin-1996}. From the discussion  preceding  this proof, we have that $v=v^{\dagger}_{q}$.   In the remainder of this proof, we use
 \begin{equation}\label{LK-2}
\frac{\phi(\lambda)}{\lambda}= a+\int_{0}^{\infty}e^{-\lambda x}\overline{\Pi}(x)\, dx.
 \end{equation}
 which is obtained by integrating by parts (\ref{LK}).

1) Taking joint Laplace   transforms, we get that for all $\lambda,\mu \geq 0$,
\begin{eqnarray*}
\Psi(\lambda,\mu)&=&\er \left[\exp\{-\lambda G_{\erva} - \mu (\erva-G_{\erva}) \}\right]\\
&=&\alpha\int^{\infty}_{0}dt \, e^{-(\mu +\alpha) t }\int_{[0,t]}\pr(G_{t}\in dz)e^{(\mu -\lambda) z}\\
&=&\alpha\int^{\infty}_{0}dt \,e^{-(\mu +\alpha) t}\int_{[0,t)}V(dz)\overline{\Pi}(t-z)\,e^{(\mu -\lambda) z}+\alpha\int^{\infty}_{0}dt \,e^{-(\alpha+\lambda) t}a v(t)\\
&=&\alpha\int_{[0,\infty)}V(dz)\,e^{(\mu -\lambda) z} \,e^{-(\mu +\alpha) z} \int^{\infty}_{z}dt\, e^{-(\mu +\alpha) (t-z)}\,\overline{\Pi}(t-z)+\frac{\alpha a}{\phi(\alpha+\lambda)}\\
&=&\frac{\alpha}{\phi(\alpha+\lambda)}\left(a + \int^{\infty}_{0}du
\, e^{-(\mu +\alpha) u}\,\overline{\Pi}(u)\right).
\end{eqnarray*}
It follows, by making use of (\ref{LK-2}), that
\begin{eqnarray*}
\Psi(\lambda,u)=\frac{\alpha}{\phi(\alpha+\lambda)} \frac{\phi(\alpha+ \mu)}{\alpha+\mu}
=\frac{\phi(\alpha)}{\phi(\alpha+\lambda)} \times \frac{\alpha \, \phi(\alpha +\mu)}{\phi(\alpha)\,(\alpha +\mu)}
\end{eqnarray*}
which gives the independence.

2)   The infinite divisibility property of $G_{\erva}$  follows from (\ref{eq:1}) and the fact that
\begin{eqnarray*}
\int_{\mathbb{R}^+}(1\wedge x) e^{-\alpha x}\int_{(0,\infty)}\pr (\xi_t\in dx)\frac{dt}{t}&=&\int_0^{\infty} \frac{dt}{t}\er [(1\wedge \xi_t) e^{-\alpha \xi_t}]dt\\
&\leq& \int_0^{\infty} \frac{dt}{t}\er [\xi_t e^{-\alpha \xi_t}]dt<\infty.
\end{eqnarray*}
The finiteness in the last inequality is obtained from
\begin{equation}\label{calcul}
\int^{\infty}_{0}\frac{dt}{t}\er [\xi_{t}\, e^{-\lambda\xi_{t}}]=\int^{\infty}_{0}
\frac{dt}{t}\,t\,\phi'(\lambda)\, e^{-t\phi(\lambda)}=\frac{\phi'(\lambda)}{\phi(\lambda)}
\end{equation}
since
 \[
 \er [\xi_{t}\, e^{-\lambda \xi_{t}}]=-\left(\er [e^{-\lambda\xi_{t}}]\right)'=t\phi'(\lambda)\, e^{-t\phi(\lambda)},\quad \lambda,t>0.
  \]

 The expression giving the probability distribution $\pr(G_{\erva}\in dx)$ is obtained from (\ref{potential}). Finally, we have
 \[\er [G_{\alpha}]=\phi(\alpha)\int_{(0,\infty)}xe^{-\alpha x}V(dx)=\frac{\phi'(\alpha)}{\phi(\alpha)} \]
 where the  integral is evaluated by differentiating (\ref{potential}).  \\

3) From the above identities, it is immediate that we have
\begin{eqnarray}\label{eq:overshoot}
\er \left[\exp\left\{-\lambda\left(\erva- G_{\erva}\right)\right\}\right]&=&\frac{\alpha}{\phi(\alpha)}\frac{\phi(\alpha+\lambda)}{\alpha+\lambda}\nonumber\\
&=&\frac{\alpha}{\phi(\alpha)}\int_0^\infty e^{-\lambda x} \left[e^{-\alpha x} \overline{\Pi}(x)\, dx+a\delta_{0}(dx)\right]
\end{eqnarray}
where we used (\ref{LK-2}) to get the second equality.
\end{proof}

We next prove  Lemma~\ref{harmonic} which relates the measure $\kappa$, as defined in (\ref{kappa}), to the h.p.m.$\,$of $\xi$.
\begin{proof}[Proof of Lemma~\ref{harmonic}] The first statement of Lemma \ref{harmonic}  is a reformulation of (\ref{calcul}).
  The identity relating $\kappa$ and the h.p.m.$\,$of $\xi$ follows by inverting Laplace transforms. The last formula in Lemma~\ref{harmonic} is obtained by integrating (\ref{kappa}) over $[1, \lambda]$.
\end{proof}

\subsection{Proof of Theorem~\ref{exp-undershoot-id} } \label{subsection-exp-undershoot-id}
The equivalence between (iii) and (iv) is proved in Theorem 1.9 in Berg \cite{berg1}. The equivalence between (iv) and (v) is obtained from Theorem 3.3 in \cite{Hirsch-Yor} and the identification of the measure $\kappa$ in  Lemma~\ref{harmonic}. The equivalence between (v) and (vi) is straightforward. Finally, the equivalence between the assertions (v) and (i) is deduced from the identity
\[
\er \left[\exp\{-\lambda \, (\erva  - G_{\erva})\}\right]=\exp\left\{-\int^{\infty}_{0}\left(1-e^{-\lambda x}\right)e^{-\alpha x}\left( x^{-1}dx-H(dx)\right)\right\},\quad \lambda\geq 0,
\]
which follows from (\ref{undershoot0}), (\ref{eq:1}) and a further application of Frullani's integral. That (i) and (ii) are equivalent follows from the third assertion in Lemma~\ref{exp-undershoot}. Finally, that the statements in (i) and (ii) hold for all $\alpha>0$, when they hold for some $\alpha_0>0$, follows from the fact that,  if the measure $e^{-\alpha x}(x^{-1}dx-H(dx))$
is nonnegative for  $\alpha=\alpha_0,$ then it is also nonnegative for all $\alpha>0$.
\subsection{Proof of Theorem \ref{distribution-infty}}\label{subsection-distribution-infty}
Following \cite{Carmona-Petit-Yor-1997}, we start by observing that
the following functional equation for the moments of $I$ holds
\begin{equation}\label{functional}
\er [I ^{s}]=\frac{s}{\phi(s)}
\er [I ^{s-1}],\quad    s>0, \quad \hbox{and}\quad
\er [I^0]=1.
\end{equation}
By using (\ref{factorisation}), we get also a functional equation for the moments of $R$. That is
\begin{equation}\label{functional-1}
\er [R ^{s}]=\phi(s)
\er [R ^{s-1}], \quad  s>0, \quad \hbox{and}\quad
\er [R^0]=1.
\end{equation}
Next, given a log-concave function $g:\mathbb{R}_+\rightarrow \mathbb{R}_+$, we will denote by $g'_{+}$ and
$g'_{-}$  the right and left derivatives of $g$,
respectively. Consider the more general functional equation
\begin{equation}\label{functional-equation}
f(s+1)=g(s)f(s),\quad  s>0, \quad \hbox{and}\quad f(1)=1.
\end{equation}
This equation was  studied by Webster in
\cite{Webster-1997};  he was  motivated therein by the study of
generalized gamma functions and their characterization by a
Bohr-Mollerup-Artin type theorem.

\begin{teo}\label{Webster} (Webster \cite{Webster-1997})  Assume that $g$ is log-concave and $ \lim_{s\rightarrow \infty}g(s+c)/g(s)=1$ for all $c>0$.   Then, there exists a unique  log-convex  solution $f:\mathbb{R}_+\rightarrow \mathbb{R}_+$  to the functional equation (\ref{functional-equation}) satisfying $f(1)=1,$ which is given by the generalized gamma function $\Gamma_g(s)$ specified by (\ref{gamma-1}). Furthermore, if $g(s)\rightarrow 1$, as
$s\rightarrow \infty$, then we have
\begin{equation}\label{gamma-2}
\Gamma_g(s)=\frac{1}{g(s)} \prod_{n=1}^{\infty}\frac{g(n)}{g(n+s)}.
\end{equation}
\end{teo}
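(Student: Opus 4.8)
The plan is to prove Theorem~\ref{Webster} by a Bohr--Mollerup--Artin type argument, split into two halves: first that the explicit product $\Gamma_g$ of (\ref{gamma-1}) is a well-defined, positive, log-convex solution of (\ref{functional-equation}), and then that \emph{any} log-convex solution must coincide with it. Throughout I would set $h:=\log g$, which by hypothesis is concave, and record once and for all the elementary consequences of concavity of $h$ together with the asymptotic condition $g(s+c)/g(s)\to1$: the increments $h(n)-h(n-1)$ are non-increasing in $n$; the one-sided derivatives satisfy $h(n+1)-h(n)\le h'_{+}(n)\le h'_{-}(n)\le h(n)-h(n-1)$, so $a_n$ lies between two consecutive increments; taking $c=1$ in the hypothesis gives $h(n+1)-h(n)\to0$, hence $a_n\to0$ and, by telescoping, $\sum_{n\ge2}\bigl(2h(n)-h(n+1)-h(n-1)\bigr)<\infty$.

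\textbf{Existence.} With those facts in hand I would first show that the sequence defining $\gamma_g$, namely $b_n:=\sum_{j=1}^{n}a_j-h(n)$, is non-increasing (since $a_n\le h(n)-h(n-1)$) and bounded below (since $b_n-b_1\ge (h(n+1)-h(n))-(h(2)-h(1))$ by telescoping), hence convergent; this legitimizes (\ref{EM-gen-gamma}). Next, the general factor of the product in (\ref{gamma-1}) has logarithm $h(n)-h(n+s)+a_n s$, and I would sandwich it as $0\le h(n)-h(n+s)+a_n s\le s\bigl(2h(n)-h(n+1)-h(n-1)\bigr)$ using the tangent and chord bounds for the concave $h$; the right-hand side being summable, the product converges to a value in $[1,\infty)$, so $\Gamma_g:\re^{+}\to\re^{+}$ is well defined and positive. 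The functional equation and normalization are then a direct computation with telescoping partial products: $\prod_{n=1}^{N}\tfrac{g(n+s)}{g(n+s+1)}=\tfrac{g(1+s)}{g(N+1+s)}$ and $\prod_{n=1}^{N}e^{a_n}=e^{\sum_{1}^{N}a_j}$ leave $e^{-\gamma_g}g(s)\exp\{\sum_{1}^{N}a_j-h(N+1+s)\}$; writing $\sum_{1}^{N}a_j-h(N+1+s)=b_N+(h(N)-h(N+1+s))$ and using $b_N\to\gamma_g$ together with $h(N+1+s)-h(N)\to0$ (the hypothesis with $c=1+s$) gives $\Gamma_g(s+1)/\Gamma_g(s)=g(s)$, and the same manipulation at $s=1$ gives $\Gamma_g(1)=1$.

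\textbf{Uniqueness and the reduced product.} Let $f$ be any log-convex solution of (\ref{functional-equation}). Iterating, $\log f(n+s)=\log f(s)+\sum_{k=0}^{n-1}h(s+k)$ and $\log f(n)=\sum_{k=1}^{n-1}h(k)$. For $s\in(0,1)$ and $n$ large, convexity of $\log f$ on $n-1<n<n+s<n+1$, via monotonicity of chord slopes, yields $s\,h(n-1)\le \log f(n+s)-\log f(n)\le s\,h(n)$, so $\log f(s)$ is squeezed between two explicit quantities whose difference is $s\bigl(h(n)-h(n-1)\bigr)\to0$. Hence $f(s)$ is uniquely determined on $(0,1)$, and on all of $\re^{+}$ by (\ref{functional-equation}), and equals the common limit $\lim_n\bigl[s\,h(n)+\sum_{k=1}^{n-1}h(k)-\sum_{k=0}^{n-1}h(s+k)\bigr]$. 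Inserting $\pm a_k s$ into $\sum_{k=1}^{n-1}(h(k)-h(s+k))$ and noting $s\,h(n)-s\sum_{k=1}^{n-1}a_k=-s\,b_n+s\,a_n\to -s\gamma_g$ identifies this limit with $\log\Gamma_g(s)$ in the form (\ref{gamma-1}); since each approximating term is convex in $s$ (its only non-affine piece $-\sum h(s+k)$ being convex because $-h$ is), $\Gamma_g$ is log-convex, so it is \emph{the} unique log-convex solution. Finally, if moreover $g(s)\to1$ as $s\to\infty$, then $h(n)\to0$, so $\gamma_g=\sum_{k\ge1}a_k$ and $e^{-\gamma_g s}=\prod_k e^{-a_k s}$; since a concave $h$ tending to a finite limit is non-decreasing with $\sum_n h'_{+}(n)<\infty$, one gets $\sum_n(h(n+s)-h(n))<\infty$, so $\prod_{n}g(n)/g(n+s)$ converges on its own and the exponentials may be distributed into the product, collapsing (\ref{gamma-1}) to (\ref{gamma-2}).

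\textbf{Main obstacle.} The argument turns on the synergy of the two hypotheses: log-concavity alone gives only the monotone control of increments and derivatives, while the ratio condition $g(s+c)/g(s)\to1$ forces those increments to vanish at infinity, and this vanishing is exactly what simultaneously makes $\gamma_g$ converge, makes the Weierstrass-type product in (\ref{gamma-1}) converge, and makes the Bohr--Mollerup squeeze close (its gap being $(g(n)/g(n-1))^{s}$). The delicate point, which I would write out carefully, is the chain of tail estimates for concave $h$ — the sandwich $0\le h(n)-h(n+s)+a_n s\le s\bigl(2h(n)-h(n+1)-h(n-1)\bigr)$ and the telescoping summability of its right-hand side — everything else reducing to bookkeeping with telescoping products.
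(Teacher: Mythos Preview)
The paper does not prove Theorem~\ref{Webster}: immediately after the statement it records only that the result ``is obtained from a combination of Theorem 4.1 of \cite{Webster-1997} and the subsequent discussion in Section 7'' of that reference, and then applies it. There is thus no in-paper argument to compare against; you have supplied a full proof where the authors give a bare citation. Your scheme---convergence of $\gamma_g$ and of the Weierstrass-type product via concavity estimates, verification of the functional equation by telescoping, and uniqueness by the Bohr--Mollerup squeeze on $\log f$---is precisely the method of Webster's paper, so in substance you are reconstructing the cited proof.

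One point deserves tightening. In your sandwich
\[
0\;\le\; h(n)-h(n+s)+a_{n}s\;\le\; s\bigl(2h(n)-h(n+1)-h(n-1)\bigr)
\]
the right-hand inequality uses $h(n+s)\ge h(n)+s\bigl(h(n+1)-h(n)\bigr)$, which for a concave $h$ is the chord bound on the segment $[n,n+1]$ and is therefore valid only for $s\in(0,1]$; for $s>1$ the extended secant lies \emph{above} the graph and the inequality reverses. The repair is already implicit in what you wrote: your telescoping computation shows that the \emph{partial} products satisfy $P_{N}(s+1)/P_{N}(s)\to g(s)$ for every $s>0$, independently of whether either factor converges. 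Hence, once convergence of $P_{N}(s)$ is established on $(0,1]$ (where your bound is correct), convergence on $(1,2]$, $(2,3]$, \dots\ follows by induction, and the limit automatically satisfies $\Gamma_g(s+1)=g(s)\Gamma_g(s)$. With that adjustment the outline is sound.
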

Theorem \ref{Webster} is obtained from a combination of Theorem 4.1
of \cite{Webster-1997} and the subsequent discussion in Section 7
for the case when $g$ (resp. $f$) is log-concave (resp. log-convex)
on $\mathbb{R}_+$.  Note that if $g(x)=x$ then  (\ref{gamma-2})
gives the well known  infinite product representation of the gamma
function
\begin{equation}\label{gamma-weierstrass}
\Gamma(s)=\frac{1}{s}e^{-\gamma s}\prod_{n=1}^{\infty}
\left(1+\frac{s}{n}\right)^{-1}e^{s/n} \quad \hbox{for} \quad s>0
\end{equation}
where $\gamma$ stands for the Euler-Mascheroni constant. A combination  of the aforementioned results and  some arguments taken from the proof of Theorem 2.2 in \cite{Maulik-Zwart-06} leads to the first assertion of Theorem~\ref{distribution-infty}.
\begin{proof}[Proof of Theorem~\ref{distribution-infty}] (1) To prove the first assertion, we observe that the Frullani integral allows us to write
\begin{equation}\label{log-repr}
\log\frac{\phi(\lambda)}{\phi(1)}=\iint_{(0,\infty)\times (0,\infty)} (e^{-x}-e^{-\lambda x})P(\xi_t\in dx)\frac{dt}{t},\quad \lambda>0,
\end{equation}
which implies the log-concave property for $\phi$. The function $\phi^*$ is log-concave since by (\ref{LK-2}),
$1/\phi^*$ is c.m.$\,$and then log-convex.  The condition $\lim_{s \rightarrow \infty} \phi(s+c)/\phi(s)=1$, for all $c\geq 0$, is obtained as follows. By using formula (\ref{LK}), we clearly have
$$\phi(s+c)-\phi(s)=ca+\int_{(0,\infty)}(1-e^{-cx})e^{-sx}\Pi(dx), \quad s,c\geq 0.$$  It follows that
$$\frac{\phi(s+c)}{\phi(s)}=1+\frac{ca}{\phi(s)}+\frac{1}{\phi(s)}\int_{(0,\infty)}(1-e^{-cx})e^{-sx}\Pi(dx), \quad s,c\geq 0,$$
where the right hand side   goes to $1$ thanks to the dominated convergence theorem and the fact that, when $a>0,$ $\phi(s)\to \infty$ as $s\to\infty.$
 Theorem \ref{Webster} implies that (\ref{functional-equation}), with
$g=\phi$, has the unique log-convex solution given by
$\Gamma_\phi(s)$. We claim that the function $s\mapsto \er [R^s]$ is
log-convex. Indeed, for $a,b>0$ such that $a+b=1$ and $0\leq
u,v<\infty,$ we apply H\"older's inequality to infer the relation
$$\er\left[R^{au+bv}\right]=\er\left[R^{au}R^{bv}\right]\leq \left(\er\left[R^{u}\right]\right)^{a}\left(\er\left[R^{v}\right]\right)^{b}.$$ The log convexity follows.
Hence, $\er [R^{s}]=\Gamma_\phi(s+1)$. Now, the function
$h:s\rightarrow \er [R^{s}]\er [I^s]$
satisfies (\ref{functional-equation}) with $g(s)=s$ and $h(1)=1$.
Hence, by Bohr-Mollerup-Artin theorem for the gamma function, we
conclude that $h(s)=\Gamma(s)$. For the second statement, on the one hand, we have $\er [\erv^{s}]= \Gamma(s+1)$. On the other hand, using the factorization (\ref{factorisation}), we can write
\begin{eqnarray*}
\er [\erv^{s}]=\er [I^{s}]\er [R^{s}]=\er [I^{s}]\, \Gamma_\phi(s+1).
\end{eqnarray*}
The result follows.

(2)  Assuming that $\xi$ is immortal, let $Y$ be $\xi$ killed at
rate $\alpha$.  Plainly,  $Y$ is a subordinator with Laplace
exponent $\phi(\cdot)+\alpha$. Thus, we have
\[
I_{\erva}=\int_0^{\infty}e^{-Y_s}\, ds.
\]
For convenience, in the remainder of this proof, we replace the notation $\Gamma_\phi(s)$ by $\Gamma(\phi(\cdot); s)$. The first assertion of  Theorem \ref{distribution-infty}, when applied to $Y$, implies that
\[
\er [I_{\erva} ^{s}]=\frac{\Gamma(s+1)}{\Gamma(\phi(\cdot) +\alpha; s+1)}, \quad  s>0.
\]
Next, let us introduce a new probability measure by
setting
\begin{equation*}
d\pr_{|\mathcal{F}_t}^{(\mu)}=e^{-\mu
\xi_t+\phi(\mu)t}d\pr_{|\mathcal{F}_t}, \quad t\geq 0,
\end{equation*}
and denote by $\er^{(\mu)}$
the expectation under $\pr^{(\mu)}$. Clearly, under $\pr^{(\mu)}$,  $\xi$  is a
subordinator with Laplace exponent $\phi(\cdot+\mu)-\phi(\mu)$.
Thus, applying the first assertion,   we obtain
\begin{equation} \label{exponential-2}
\er^{(\mu)}[I_{\erva}^{s}]=\frac{\Gamma(s+1)}{\Gamma(\phi(\cdot+\mu)-\phi(\mu)+\alpha; s+1)}.
\end{equation}
Now, by performing a change of measure, we can write
\begin{eqnarray*}
\er^{(\mu)}[I_{\erva}^s]&=&\er [(I_{\erva})^s \, e^{-\mu \xi_{\erva}+\phi(\mu){\erva}}]\\
&=& \alpha\int_0^{\infty}
e^{-(\alpha-\phi(\mu))t}\, \er [(I_{t})^s\, e^{-\mu \xi_{t}}]\, dt\\
&=& \Gamma(s+1)/\Gamma(\phi(\cdot+\mu)-\phi(\mu)+\alpha; s+1).
\end{eqnarray*}
  Since $\alpha$  can be taken to be arbitrary, replacing $\alpha$  by $\alpha +\phi(\mu)$   in the second and last equalities and rearranging terms,
we get the second statement.
\end{proof}
\subsection{Proof of Theorem \ref{R-product} and Corollary \ref{remainder-gordon}}    \label{sect:remainders}
We have all the elements to conclude the  proof of Theorem~\ref{R-product}.
\begin{proof}[Proof of Theorem~\ref{R-product}]  An application of (\ref{PsiR}) and formula (\ref{gammaphi-0}), which is given below, gives  the first claim of the theorem.  Let us prove the identity in law of (\ref{factor-0}). Notice that we have
\[
\er \left[G^{(k)} \right]=\frac{\phi'(k)}{\phi(k)}, \quad k=1,2,
\ldots
\]
 For all $s>0$, we can write
\begin{eqnarray}
\er \left[  e^{ s  \left(- \gamma_{\phi} + \sum_{k=1}^{N}
(E[G^{(k)}]-G^{(k)}) \right) }
\right]&=&e^{-\gamma_{\phi}s}\prod_{k=1}^{N}
e^{s \frac{\phi'(k)}{\phi(k)}}\er \left[e^{-sG^{(k)}}\right]  \nonumber \\
&=&e^{-\gamma_{\phi}s}\prod_{k=1}^{N} e^{s
\frac{\phi'(k)}{\phi(k)}}\frac{\phi(k)}{\phi(s+k)}  \longrightarrow
\er\left[R^s \right]\quad \hbox{as} \quad N\rightarrow \infty,
\label{limfactor-0}
\end{eqnarray}
where, for the last equation, we used Theorem \ref{distribution-infty}
and the injectivity of Mellin transform. Let us now prove the first
equality (in law)  of (\ref{factor-0}) by using the aforementioned
result of Berg.  We have the identity
\begin{eqnarray*}
\er \left[\exp
\lambda\sum^{N}_{k=1}\left(\phi_{k}(1)-G^{(k)}\right)\right] &=&
\prod^{N}_{k=1}\er \left[\exp\lambda\left(\phi_{k}(1)-G^{(k)}\right)\right]\\
&=&\prod^{N}_{k=1}\exp-\int_{\re^{+}}\! \! \! \!H(dx)e^{-k x}\left(1-e^{-\lambda x}-\lambda (1-e^{-x})\right)\\
&=&\exp-\int_{\re^{+}} \! \! \! \! H(dx)\frac{e^{-x}\left(1-e^{-Nx}\right)}{1-e^{-x}}\left(1-e^{-\lambda x}-\lambda (1-e^{-x})\right)\\
&\longrightarrow & \exp-\int_{\re^{+}} \! \! \! \!
H(dx)\frac{e^{-x}}{1-e^{-x}}\left(1-e^{-\lambda x}-\lambda
(1-e^{-x})\right)\quad \hbox{as}\quad N\rightarrow \infty,
\end{eqnarray*}
where the last equality is obtained by using the dominated
convergence theorem. By virtue of (\ref{loggc}), which can be written in the form
\begin{equation} \label{integral-3}
\log \Gamma_\phi(s+1)=s\log{\phi(1)}+ \int_{(0,\infty)}
\left(s-\frac{1-e^{-sx}}{1-e^{-x}} \right)e^{-x}H(dx), \quad s>0,
\end{equation}
the
last term in the previous   formula  equals $ \phi (1)^{-s}\er\left
[e^{\lambda \log R} \right]$.  We conclude using the injectivity of
 the two-sided Laplace transform. Finally, let us check that
the second equality of (\ref{factor-0}) holds true. For that, we can
write
\begin{equation*}
\begin{split}
-\gamma_{\phi}+\sum_{k=1}^{N}&\left(E[G^{(k)}]-G^{(k)}\right)-\log{\phi(1)}-\sum_{k=1}^{N}\left(\phi_k(1)-G^{(k)}\right)\\
&=-\gamma_{\phi}-\log {\phi(1)}+\sum_{k=1}^{N}\left(E[G^{(k)}]-\phi_k(1)\right)\\
&=-\gamma_{\phi}-\log {\phi(1)}+\sum_{k=1}^{N} \left( \frac{\phi'(k)}{\phi(k)}+\log \er[e^{-G^{(k)}}] \right)  \\
&=-\gamma_{\phi}-\log {\phi(1)}+\sum_{k=1}^{N}\left(\frac{\phi'(k)}{\phi(k)}+\log\frac{\phi(k)}{\phi(k+1)}\right)\\
&=-\gamma_{\phi}+\sum_{k=1}^{N}\frac{\phi'(k)}{\phi(k)}-\log{\phi(N+1)}
\rightarrow 0\quad \hbox{as}\quad N\rightarrow \infty.
\end{split}
\end{equation*}
\end{proof}
\begin{proof}[Proof of Corollary~\ref{remainder-gordon}]Formula (\ref{factor-0}) can be written as
\begin{eqnarray*}
\log{R}&\stackrel{\text{(d)}}{=}&-\gamma_{\phi}+\sum_{k=1}^{n}\left(\er [G^{(k)}]-G^{(k)}\right)+\sum_{k=n+1}^{\infty}\left(\er [G^{(k)}]-G^{(k)}\right)\\
&=&-\gamma_{\phi}+\sum_{k=1}^{n}\left(\er [G^{(k)}]-G^{(k)}\right)+\sum_{k=1}^{\infty}\left(\er [G^{(k+n)}]-G^{(k+n)}\right)\\
&\stackrel{\text{(d)}}{=}&-\gamma_{\phi}+\sum_{k=1}^{n}\left(\er
[G^{(k)}]-G^{(k)}\right)+\log{R_{(n)}}+\gamma_{\phi(\cdot+n)},
\end{eqnarray*}
where $R_{(n)}$, which is assumed to be independent of the r.v.'s in
the  r.h.s. of the above formula, is the analogue of $R$ when we
work with a subordinator having the Laplace exponent $\phi(\cdot+n)$
instead of $\phi(\cdot)$. Next, we have
\begin{eqnarray*}
\gamma_{\phi(\cdot+n)}&=&\lim_{m\rightarrow \infty} \left\{\sum_{k=1}^m \frac{\phi'(k+n)}{\phi(k+n)}-\log {\phi(m+n)} \right\}\\
&=&\lim_{m\rightarrow \infty} \left\{\sum_{k=1}^{m+n}
\frac{\phi'(k)}{\phi(k)} -\log {\phi(m+n)} \right\}-\sum_{k=1}^n
\frac{\phi'(k)}{\phi(k)}\\
&=&\gamma_{\phi}-\sum_{k=1}^n \frac{\phi'(k)}{\phi(k)}.
\end{eqnarray*}
It follows that
\begin{equation}\label{Gordon-2}
\log{R}\stackrel{\text{(d)}}{=}\sum_{k=1}^{n}\left(\er
[G^{(k)}]-G^{(k)}\right)+\log\frac{R_{(n)}}{\phi(n+1)}+d_n \,,
\end{equation}
where
\begin{equation*}
d_n:=-\sum_{k=1}^n \frac{\phi'(k)}{\phi(k)}+\log \phi(n+1).
\end{equation*}
Now, we can write
\begin{eqnarray*}
d_n
&=&\log\phi(1)-\sum^{n}_{k=1}\left[\frac{\phi^{\prime}(k)}{\phi(k)}-\log\frac{\phi(k+1)}{\phi(k)}\right]\\
&=&\log\phi(1)-\sum^{n}_{k=1}\int^{\infty}_{0}e^{-(k-1)x}e^{-x}(x-(1-e^{-x}))H(dx)\\
&=&\log\phi(1)-\int_{(0,\infty)}H(dx)\frac{e^{-x}}{(1-e^{-x})}\left(e^{-x}-1+x\right)(1-e^{-nx}),
\end{eqnarray*}
where, to obtain the second equality, we used Lemma~\ref{harmonic}.
Moreover, the above calculations show that the constant
$\gamma_\phi$ defined in (\ref{EM-gen-gamma}) admits the
representation
\begin{equation}\label{gammaphi-0}
-d_{n}\xrightarrow[n\to\infty]{}\gamma_\phi =-\log \phi(1)+
\int_{(0,\infty)}\left( e^{- x}-1 +
x\right)\frac{e^{-x}}{1-e^{-x}}H(dx).
\end{equation}

The representation of the error term $B_{n}$ follows immediately
from this expression. Recall {that}  $H^{(n)}(dx)=e^{-nx}H(dx)$ is
the h.p.m.$\,$associated to $\phi(\cdot+n)$. Thus, by applying (\ref{integral-3}), for all $s>0$, we can write
  \begin{eqnarray*}
  \er\left( (R_{(n)}/\phi(n+1))^s\right)&=&
\frac{\Gamma_{\phi(\cdot+n)}(s+1)}{\phi(1+n)^s}\\
&=&\exp{\int_{\re^+} \left(s-\frac{1-e^{-sx}}{1-e^{-x}} \right)e^{-x}H^{(n)}(dx)}\\
&=&\exp{\int_{\re^+} \left(s-\frac{1-e^{-sx}}{1-e^{-x}}
\right)e^{-(n+1)x}H(dx)}\rightarrow 1 \quad \hbox{as}\quad
n\rightarrow \infty,
\end{eqnarray*}
where we applied the dominated convergence theorem. This concludes
the proof of the Corollary.\end{proof}

\section{Further properties of harmonic potentials of subordinators}\label{furtherproperties}
Before taking a look at some examples, in the next section, we make here a relatively long digression to establish some identities and properties for
harmonic potentials.  We start with the following remark.
\begin{remark}\label{digression}
 (i) We read from formula (\ref{kappa})
that the Bernstein function $\phi$ is linked with the h.p.m $H$ by the expression
\begin{equation} \label{fisurfi'}
\frac{\phi'(\lambda)}{\phi(\lambda)}=\int_{(0,\infty)}e^{-\lambda x
} \,x\,H(dx),\quad \lambda >0.
\end{equation}
 But, for every $c
>0$, $\phi$ and $c \,\phi$  share the same logarithmic derivative. Hence, they share the  same  h.p.m.\\
(ii) If $\xi$ and $N$ two independent subordinators and $\xi \circ
N$ denotes the process $\xi$ subordinated by $N$,  then the finite-dimensional
distributions of $\xi \circ N$ are recovered by
$$\pr \bigl((\xi \circ N)_t \in dx\bigr) =\int_0^\infty  \pr(\xi_s\in dx) \pr ( N_t \in ds),\quad t\geq 0.$$
Hence, the potential and harmonic potential measures for the
composition are trivially, and respectively, given by
\begin{eqnarray*}
 V_{\xi \circ N}(dx)  =\int_0^\infty  \pr(\xi_s\in dx) V_{N}(ds), \quad
 H_{\xi \circ N}(dx)  =\int_0^\infty  \pr(\xi_s\in dx)  H_{N}(ds).
\end{eqnarray*}
\end{remark}
 Let $S^\gamma,\; 0<\gamma<1,$ be a $\gamma$-stable subordinator.
That is, it has no killing term and drift, and its  L\'evy measure is given by
\begin{equation} \label{stable}cx^{-1-\gamma}\,dx,\quad x>0,
\end{equation}
where $c>0$ is a normalizing constant.
 Since the associated Bernstein function is proportional to $\lambda^\gamma$, we get, by Remark \ref{digression}~(ii),
 that the corresponding h.p.m is equal to $\gamma \,x^{-1}\, dx, \;
 x>0$. So, assume that $S^\gamma$ is independent of a subordinator $\xi$ and
define a new subordinator $\xi^{\gamma}$  by  subordinating
$\xi$  by $S^\gamma.$  Denoting by $H^{\gamma}$ the h.p.m.$\,$of $\xi^{\gamma},$ we deduce from the above discussion the following simple,  but useful, result which relates $H^{\gamma}$ and $H.$
\begin{lemma}
 It holds that
\[
H^{\gamma}(dx)=\gamma H(dx),\quad  x>0.
\]
\end{lemma}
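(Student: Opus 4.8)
The plan is to combine the subordination formula for harmonic potentials from Remark~\ref{digression}(ii) with the explicit computation of the h.p.m.\ of a stable subordinator. First I would recall that $\xi^\gamma$ is by definition $\xi$ subordinated by the independent $\gamma$-stable subordinator $S^\gamma$, so by the second displayed identity in Remark~\ref{digression}(ii) we have
\[
H^\gamma(dx) = \int_0^\infty \pr(\xi_s \in dx)\, H_{S^\gamma}(ds), \qquad x>0,
\]
where $H_{S^\gamma}$ denotes the h.p.m.\ of $S^\gamma$. The crux is therefore to identify $H_{S^\gamma}$.

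Second, I would compute $H_{S^\gamma}$. The Laplace exponent of $S^\gamma$ is $c'\lambda^\gamma$ for some constant $c'>0$ (the precise constant depending on the normalization in~(\ref{stable})). By the identity~(\ref{fisurfi}) of Lemma~\ref{harmonic}, $H_{S^\gamma}$ is the unique measure on $(0,\infty)$ satisfying
\[
\log\frac{c'\lambda^\gamma}{c'} = \gamma\log\lambda = \int_{(0,\infty)}\bigl(e^{-x}-e^{-\lambda x}\bigr)H_{S^\gamma}(dx), \qquad \lambda>0.
\]
Since the Frullani integral gives $\gamma\log\lambda = \gamma\int_0^\infty (e^{-x}-e^{-\lambda x})\,\frac{dx}{x}$, uniqueness in Lemma~\ref{harmonic} forces $H_{S^\gamma}(dx)=\gamma\, x^{-1}\,dx$ on $(0,\infty)$. (This is exactly the statement already recorded just before the lemma, so one may also simply cite it; note that the value of $c'$ is irrelevant by Remark~\ref{digression}(i), since $\phi$ and $c\phi$ share the same h.p.m.)

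Third, I would substitute this into the subordination formula:
\[
H^\gamma(dx) = \int_0^\infty \pr(\xi_s\in dx)\,\gamma\,\frac{ds}{s} = \gamma\int_0^\infty \frac{ds}{s}\,\pr(\xi_s\in dx) = \gamma\, H(dx),
\]
the last step being the definition of the h.p.m.\ $H$ of $\xi$. A small point to verify is that the interchange of the $dx$ and $ds$ "integrations" is legitimate, i.e.\ that one is genuinely applying Fubini--Tonelli to the nonnegative kernel $\pr(\xi_s\in dx)\otimes \gamma s^{-1}ds$; this is immediate since everything in sight is nonnegative, and the resulting measure is $\sigma$-finite because $H$ itself is (it is a Lévy measure of an i.d.\ law near the origin, by the remarks following Lemma~\ref{harmonic}). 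I do not anticipate a serious obstacle here: the only genuinely substantive input is the identification $H_{S^\gamma}(dx)=\gamma x^{-1}dx$, which is already in hand, and the rest is bookkeeping with Tonelli's theorem.
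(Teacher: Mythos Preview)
Your proposal is correct and follows essentially the same approach as the paper: identify the h.p.m.\ of the stable subordinator as $\gamma\,x^{-1}dx$ and then plug this into the subordination formula for harmonic potentials from Remark~\ref{digression}(ii). The paper's argument is given informally in the paragraph preceding the lemma rather than in a separate proof environment, and your write-up simply spells out the Frullani computation for $H_{S^\gamma}$ a bit more explicitly than the paper does.
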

A natural question is to know how the p.m.$\,$and the h.p.m.$\,$are related.  This question, which is answered in the following two lemmas, will lead to interesting consequences on the harmonic potential measure.
\begin{lemma}   We have the equality of measures, on $\mathbb{R}^+$,
\begin{equation*}
 xH(dx)=
\left\{
\begin{array}{ll}
  \quad  \int_{[0,x]} \Pi(dy)y V(dx-y)& \textrm{ if }\  a=0;\\
 av(x) dx+\int_{[0,x]} \Pi(dy)y v(x-y) dx&  \textrm{ if }\ a>0,
\end{array}
\right.
\end{equation*}
where $v$ denotes the density of the p.m.$\,$$V,$ which we know exists when $a>0$.
\end{lemma}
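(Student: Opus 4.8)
The identity to be established expresses the measure $xH(dx)$ in terms of the L\'evy measure $\Pi$ and the potential measure $V$. The natural route is to verify it at the level of Laplace transforms, invoking the injectivity of the Laplace transform on measures on $\re^+$. By formula (\ref{fisurfi'}) in Remark~\ref{digression}, the Laplace transform of $xH(dx)$ is exactly $\phi'(\lambda)/\phi(\lambda)$, so the whole task reduces to checking that the right-hand side of the claimed identity has the same Laplace transform.

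\medskip
First I would compute $\phi'(\lambda)$ directly from the L\'evy--Khintchine representation (\ref{LK}): differentiating under the integral sign gives $\phi'(\lambda) = a + \int_{(0,\infty)} x e^{-\lambda x}\,\Pi(dx)$, which is precisely the Laplace transform of the measure $a\delta_0(dx) + x\Pi(dx)$, as already noted in the introduction just after (\ref{kappa}). Next I would recall from (\ref{potential}) that the Laplace transform of $V$ is $1/\phi(\lambda)$. Since the Laplace transform of a convolution of two measures is the product of their Laplace transforms, the measure $\bigl(a\delta_0(dx) + x\Pi(dx)\bigr) * V(dx)$ has Laplace transform $\phi'(\lambda) \cdot 1/\phi(\lambda) = \phi'(\lambda)/\phi(\lambda)$. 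Comparing with (\ref{fisurfi'}) and using injectivity of the Laplace transform yields
\[
xH(dx) = \bigl(a\delta_0 + x\Pi(dx)\bigr) * V(dx).
\]
It then remains only to write this convolution out explicitly. When $a=0$ the $\delta_0$ term vanishes and the convolution is $\int_{[0,x]} \Pi(dy)\, y\, V(dx-y)$, which is the first case. When $a>0$ the drift forces $V$ to have a density $v$ (as recalled in the statement, and as used in the proof of Lemma~\ref{exp-undershoot}), so $V(dx) = v(x)\,dx$; the $\delta_0$ term then contributes $a v(x)\,dx$ and the $x\Pi(dx)$ term contributes $\bigl(\int_{[0,x]} \Pi(dy)\, y\, v(x-y)\bigr)\,dx$, giving the second case.

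\medskip
The main point requiring a little care is the justification of differentiating under the integral sign to get $\phi'(\lambda) = a + \int_{(0,\infty)} x e^{-\lambda x}\,\Pi(dx)$ for $\lambda>0$: one needs a local dominated-convergence bound, which holds because for $\lambda$ in a compact subinterval of $(0,\infty)$ the integrand $x e^{-\lambda x}$ is bounded by $C(x\wedge 1)$ on $(0,\infty)$, and $\int_{(0,\infty)}(x\wedge 1)\,\Pi(dx)<\infty$ by assumption. A secondary, and entirely routine, point is that the convolution of the finite measure $a\delta_0 + x\Pi(dx)$ restricted appropriately with the potential $V$ is well defined on compacts; this is immediate since $V$ is a Radon measure on $\re^+$. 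No other obstacle arises, and the identity follows.
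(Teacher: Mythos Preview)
Your proof is correct and follows essentially the same approach as the paper: both compute the Laplace transform of the right-hand side, identify it as $\phi'(\lambda)/\phi(\lambda)$ using $\phi'(\lambda)=a+\int_{(0,\infty)} x e^{-\lambda x}\,\Pi(dx)$ together with (\ref{potential}) and (\ref{fisurfi'}), and conclude by injectivity of the Laplace transform. The only cosmetic difference is that you phrase the computation explicitly as the convolution $\bigl(a\delta_0 + x\Pi(dx)\bigr)*V(dx)$ before unpacking the two cases, whereas the paper writes the Laplace transform of $aV(dx)+\int_{[0,x]}\Pi(dy)\,y\,V(dx-y)$ directly; the content is identical.
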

\begin{proof} We can write
\begin{equation}
\begin{split}
\int_{\re^+}e^{-\lambda x} \left(a V(dx)+\int_{[0,x]} \Pi(dy)y
V(dx-y)\right)&=\frac{a}{\phi(\lambda)}+\frac{\phi'(\lambda)-a}{\phi(\lambda)}=\frac{\phi'(\lambda)}{\phi(\lambda)}
\end{split}
\end{equation}
for all $\lambda>0$,
where we used $\phi'(\lambda)=a+\int_{\re^+} xe^{-\lambda x}
\Pi(dx)$ and formulae  (\ref{potential}) and  (\ref{fisurfi'}). The
result follows from the injectivity of the Laplace transform.
\end{proof}

\begin{lemma}\label{Potential-harmonic} (1) We have the equality of measures
\[
H(dx)=\int_{0}^{\infty}V_{\alpha}(dx)\, d\alpha ,\quad x>0.
\]
\noindent  (2) For $x>0,$ denote the first
hitting time of $x$ by $T_{\{x\}}=\inf\{t>0: \xi_{t}=x\}$. If  $\xi$
has a strictly positive drift $a$  then the h.p.m.$\,$has a density
$h$ which is given by
\begin{equation} \label{hdensity}
h(x)=\er
\left[\frac{1}{aT_{\{x\}}}{\bf 1}_{\{T_{\{x\}}<\infty\}}\right],\quad
x>0.
\end{equation}

\end{lemma}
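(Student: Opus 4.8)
The plan is to attack the two assertions separately, each by identifying the Laplace transform of the claimed object.

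For part (1), the claim $H(dx)=\int_0^\infty V_\alpha(dx)\,d\alpha$ is essentially Fubini. First I would recall that $V_\alpha(dx)=\int_0^\infty e^{-\alpha t}\pr(\xi_t\in dx)\,dt$, so that
\[
\int_0^\infty V_\alpha(dx)\,d\alpha = \int_0^\infty\!\!\int_0^\infty e^{-\alpha t}\pr(\xi_t\in dx)\,dt\,d\alpha
= \int_0^\infty \Big(\int_0^\infty e^{-\alpha t}\,d\alpha\Big)\pr(\xi_t\in dx)\,dt
= \int_0^\infty \frac{1}{t}\pr(\xi_t\in dx)\,dt = H(dx),
\]
where the interchange of integrals is justified by Tonelli's theorem since all integrands are nonnegative. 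Alternatively, one can verify the identity by taking Laplace transforms in $x$: the right-hand side has Laplace transform $\int_0^\infty \frac{d\alpha}{\alpha+\phi(\lambda)}$, which diverges, consistently with $H$ being an infinite measure near $0$; so the Fubini argument at the level of measures is the clean route, and that is the one I would present. There is no real obstacle here.

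For part (2) the plan is to use part (1) together with the known expression for the resolvent density in the presence of a positive drift. When $a>0$ the subordinator hits points, and the $\alpha$-resolvent $V_\alpha$ has a density $v_\alpha$ which, by the standard formula (see Bertoin~\cite{Bertoin-1996}, and as already invoked in the proof of Lemma~\ref{exp-undershoot} via the identity $v=v^\dagger_q$ and the bottom of page~80 of \cite{Bertoin-1996}), satisfies
\[
v_\alpha(x) = \frac{1}{a}\,\er\!\left[e^{-\alpha T_{\{x\}}}{\bf 1}_{\{T_{\{x\}}<\infty\}}\right],\quad x>0.
\]
Then, using part (1),
\[
h(x)=\int_0^\infty v_\alpha(x)\,d\alpha = \frac{1}{a}\int_0^\infty \er\!\left[e^{-\alpha T_{\{x\}}}{\bf 1}_{\{T_{\{x\}}<\infty\}}\right]d\alpha = \frac{1}{a}\,\er\!\left[{\bf 1}_{\{T_{\{x\}}<\infty\}}\int_0^\infty e^{-\alpha T_{\{x\}}}\,d\alpha\right] = \er\!\left[\frac{1}{aT_{\{x\}}}{\bf 1}_{\{T_{\{x\}}<\infty\}}\right],
\]
the interchange of expectation and $d\alpha$-integration again being legitimate by Tonelli. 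This also shows incidentally that $H$ has a density under $a>0$, which should be remarked.

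The main point requiring care — what I would flag as the one nonroutine step — is the justification of the resolvent-density formula $v_\alpha(x)=\tfrac1a\er[e^{-\alpha T_{\{x\}}}{\bf 1}_{\{T_{\{x\}}<\infty\}}]$ and, in particular, the fact that $T_{\{x\}}<\infty$ with positive probability and $T_{\{x\}}>0$ almost surely on that event, so that $1/(aT_{\{x\}})$ is a.s. finite and the final expectation is well-defined (and equals $h(x)<\infty$). This is exactly the regularity of points for a subordinator with strictly positive drift, which is classical \cite{Bertoin-1996}; I would simply cite it, consistent with how it was used in Section~\ref{subsectionundershoot}, rather than reprove it. Everything else is Tonelli.
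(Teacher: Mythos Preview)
Your proposal is correct and follows essentially the same route as the paper: part~(1) is the identity $t^{-1}=\int_0^\infty e^{-\alpha t}\,d\alpha$ together with Fubini/Tonelli, and part~(2) combines part~(1) with the resolvent-density formula $av_\alpha(x)=\er[e^{-\alpha T_{\{x\}}}{\bf 1}_{\{T_{\{x\}}<\infty\}}]$ from \cite{Bertoin-1996}, pp.~80--81, followed by another application of Tonelli. Your write-up is in fact slightly more detailed than the paper's, but the argument is the same.
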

\begin{proof} (1) This follows from the elementary identity $t^{-1}=\int_{0}^{\infty}
e^{-\alpha t} \, d\alpha$ and Fubini's Theorem.

\noindent (2) According to the discussion in \cite{Bertoin-1996},
pp.\,80-81, for  $\alpha>0$, the $\alpha$-potential of $\xi$ has
a density, say $v_{\alpha},$ which is related to the law of the
hitting time of singletons  via the formula
 \begin{equation}\label{hitting}
 \er \left[\exp\{-\alpha T_{\{x\}}\}{\bf 1}_{\{T_{\{x\}}<\infty\}}\right]=av_{\alpha}(x),\quad x>0.
 \end{equation}
 The result follows from a combination of  assertion (1) and formula (\ref{hitting}).
\end{proof}

We emphasize that the expression of the h.p.m.$\,$given in  formula (\ref{hdensity}) above, for the case when $a>0$, is reminiscent of Kingman's well known result ensuring that the p.m.$\,$has a bounded density which is
  is proportional to the creeping probability.

By an application of Lemma~\ref{Potential-harmonic} together with
condition {\it (v)} of Theorem~\ref{exp-undershoot-id}, we obtain
 Corollary~\ref{cor:1}.
\begin{proof}[Proof of Corollary~\ref{cor:1}]\label{corollary002}
Observe that if the drift $a$ of $\xi$ is strictly positive then necessarily
\[
T_{x}\leq x/a \quad  \text{on}\quad \{T_{\{x\}}<\infty\}.
\]
Hence, the h.p.m.$\,$has a density, say $h,$ such that $xh(x)\geq 1$ for all $x>0$. This has as a consequence that the random variable $\log{I}$ is not i.d. This is true because, according to  the assertion  (v) of Theorem~\ref{exp-undershoot-id}, a necessary and sufficient condition for this to
hold is that $\kappa(dx)\leq dx.$ But, in this setting, we have $\kappa(dx)=xh(x)dx\geq dx$ for all $x>0$.
\end{proof}

A combination of Lemma \ref{Potential-harmonic} and the results of Gripenberg~\cite{gripenberg1,gripenberg2}, Friedman~\cite{friedman}, Hawkes~\cite{hawkes} and Hirsch~\cite{hirsch}  allows us to establish, without much effort, the following proposition.
\begin{prop}\label{log-convexity}
If  the L\'evy tail $\overline{\Pi}$  is log-convex  then the h.p.m.$\,$$H$ has a density on $(0,\infty)$ which is non-increasing; if furthermore the former function is c.m.$\,$then the density of $H$ on $(0,\infty)$ is also c.m.\\
\end{prop}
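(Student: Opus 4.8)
The plan is to deduce both assertions from known regularity results for the potential measures of subordinators whose L\'evy tail is log-convex, respectively completely monotone, by exploiting the mixture representation of the h.p.m.\ given in Lemma~\ref{Potential-harmonic}.

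First I would use Lemma~\ref{Potential-harmonic}(1), which states that $H(dx)=\int_0^\infty V_\alpha(dx)\,d\alpha$ on $(0,\infty)$, together with the fact recalled in Subsection~\ref{preliminaries} that, for $\alpha>0$, the $\alpha$-potential $V_\alpha$ is the potential measure of $\xi$ killed at rate $\alpha$, a subordinator with characteristic triplet $(q+\alpha,a,\Pi)$ and hence with L\'evy tail $\overline{\Pi}_\alpha(x)=(q+\alpha)+\Pi((x,\infty))=\alpha+\overline{\Pi}(x)$.

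The key elementary observation is that the hypotheses on $\overline{\Pi}$ are inherited by $\overline{\Pi}_\alpha$ for every $\alpha\geq0$. Indeed, positive constants are log-convex and the class of log-convex functions is stable under addition (by H\"older's inequality), so $\overline{\Pi}_\alpha=\alpha+\overline{\Pi}$ is log-convex whenever $\overline{\Pi}$ is; likewise positive constants are completely monotone and completely monotone functions are closed under addition, so $\overline{\Pi}_\alpha$ is completely monotone whenever $\overline{\Pi}$ is, and one recalls that a completely monotone function is in particular log-convex. I would then invoke the quoted results of Gripenberg, Friedman, Hawkes and Hirsch, applied to the killed subordinator: when $\overline{\Pi}_\alpha$ is log-convex, $V_\alpha$ has a non-increasing density $v_\alpha$ on $(0,\infty)$ (equivalently, the resolvent of the renewal equation $a\,v_\alpha+v_\alpha\ast\overline{\Pi}_\alpha=1$ is non-increasing for such kernels), and when $\overline{\Pi}_\alpha$ is moreover completely monotone---equivalently, when the L\'evy density $-\overline{\Pi}'$ is completely monotone---then $v_\alpha$ is completely monotone on $(0,\infty)$.

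Finally I would integrate over $\alpha$: by Tonelli, the restriction of $H$ to $(0,\infty)$ is absolutely continuous with density $h(x):=\int_0^\infty v_\alpha(x)\,d\alpha$. Since $H$ is a Radon measure on $(0,\infty)$ this function is locally integrable; being an integral of non-increasing functions it is non-increasing, hence finite at every $x>0$, which gives the first assertion. In the completely monotone case $h$ is an integral of completely monotone functions, hence itself completely monotone on $(0,\infty)$, which gives the second. I expect the only points needing care to be the verification that the potential-theoretic and Volterra-equation results quoted apply verbatim to \emph{killed} subordinators, and the simple remark that adding the killing rate preserves log-convexity of the L\'evy tail; the possible divergence of $h$ at the origin is harmless, as the density is only asserted on $(0,\infty)$.
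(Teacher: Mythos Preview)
Your proposal is correct and follows essentially the same route as the paper: both reduce to showing that each $V_{\alpha}$ has a non-increasing (resp.\ c.m.) density by identifying $V_{\alpha}$ with the potential of the subordinator with triplet $(q+\alpha,a,\Pi)$, noting that $\overline{\Pi}_\alpha=\alpha+\overline{\Pi}$ inherits log-convexity (resp.\ complete monotonicity), invoking the Gripenberg--Friedman--Hawkes--Hirsch results, and then passing to $H$ via Lemma~\ref{Potential-harmonic}(1). The only cosmetic difference is that the paper writes out the Volterra equation $1=av_\alpha(t)+\int_0^t(\Pi(t-s,\infty)+q+\alpha)V_\alpha(ds)$ explicitly before citing those references, whereas you keep this step at the level of a parenthetical remark.
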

\begin{remark}In fact, more can be said in the above case. It can be verified that $x\mapsto \overline{\Pi}(x)$ is c.m.$\,$if and only if $\Pi$ has a completely monotone density, which is equivalent to require that $\phi$ is a complete Bernstein function. This,  in turn, is a necessary and sufficient condition for the density of $H$, on $(0,\infty)$, to be the Laplace transform of some  function $\eta$, on $(0,\infty),$ taking values in $[0,1].$ For further details about these statements, see the monograph of Schilling et al.$\,$(\cite{SSV}, Theorem 6.10, p. 58).
\end{remark}
\begin{proof}[Proof of Proposition~\ref{log-convexity}]
Let $\alpha>0$ be fixed. We will prove that under the assumptions of the proposition, the $\alpha$-potential of $\xi$, $V_{\alpha}$, has a non-increasing (c.m.) density. The result will follow from Lemma~\ref{Potential-harmonic}. Let us recall, from the discussion in Subsection~\ref{subsectionundershoot}, that the $\alpha$-potential of the subordinator $\xi$, whose characteristic triplet is $(q, a, \Pi),$ corresponds to the $0$-potential of a subordinator with characteristic triplet $(q+\alpha, a, \Pi)$. With this remark at hand, the arguments in the proof of Lemma~\ref{exp-undershoot} allow us to ensure  that, by integrating  (\ref{volterraeq}) over $[0,t]$, we obtain
\[
1=av_{\alpha}(t)+\int^{t}_{0}\left(\Pi(t-s, \infty)+q+\alpha\right)V_{\alpha}(ds),\quad t>0.
\]
According to  \cite{gripenberg1,gripenberg2,friedman,hawkes,hirsch}, it follows that $V_{\alpha}$ has a non-increasing (c.m.) density, on $(0,\infty)$, whenever $q+\alpha+\Pi(x, \infty)$ is a log-convex (c.m.) function. But this is a straightforward consequence of the assumption that $\overline\Pi(\cdot)$ bears this property.
\end{proof}

We prove  Corollary~\ref{cor:2} by applying the first result of  Theorem~\ref{R-product} and
Proposition~\ref{log-convexity}.

\begin{proof}[Proof of Corollary~\ref{cor:2}]
According to the latter  lemma, we just need to notice
that $x\mapsto e^{-x}/(1-e^{-x})$, $x>0$, is a c.m.$\,$function.
This is true  because, on the one hand, the product of two
c.m.$\,$functions is a c.m.$\,$function. On the other hand, $e^{-x}$
is the Laplace transform of the  Dirac's delta measure concentrated
at $1$ while $(1-e^{-x})^{-1}$ is the Laplace transform of the
p.m.$\,$of a Poisson process with rate one. We conclude by  applying
Theorem 15.10 in \cite{sato} and Theorem 3.1.1 in \cite{bondesson},
respectively.
\end{proof}

\section{Examples}\label{section:examples}In this section, we will apply our results for some particular subordinators. In the first example, we obtain some general results for {\it special subordinators}. This is a very rich class of subordinators, which has been deeply studied by several authors, we refer to~\cite{SSV} for background. Some properties of exponential functionals of special subordinators have been obtained in \cite{Hirsch-Yor}.  In fact, some of the results we obtain have also been established in that paper with some relatively different arguments. Some of the specific families of subordinators,  which are considered  in the following examples, are actually special.

\begin{example}\label{example-6}  We recall that $\phi$ is said to be  a special Bernstein function if the function $\phi^{*}$ defined by
\[
\phi^{*}(\lambda)=\frac{\lambda}{\phi(\lambda)},\quad \lambda>0,
 \]
 is also a Bernstein function. In this case, we say that $(\phi,\phi^{*})$ is a conjugate pair of Bernstein functions and we denote by $(q^{*}, a^{*}, \Pi^{*})$ the characteristic triplet of $\phi^{*},$ that is
\[
\phi^{*}(\lambda)=q^{*}+\lambda a^{*}+\int_{(0,\infty)}(1-e^{-\lambda x})\Pi^{*}(dx),\quad \lambda\geq 0.
\]
 It is well known that a necessary and sufficient condition for $\phi$ to be special is for the p.m.$\,$of $\phi$ to be such that $V(dx){\bf 1}_{\{x\in(0,\infty)\}}=v(x){\bf 1}_{\{x\in(0,\infty)\}}dx,$ where $v:(0,\infty)\to\re^{+}$ is a non-increasing function. Sufficient conditions on the L\'evy measure are also known, see e.g. Hirsch \cite{hirsch} and Song and Vondracek \cite{Song-Vondracek-2006}. It is known, and easy to see, that for a special subordinator (resp.$\,$its conjugate) with Laplace exponent $\phi$ (resp.$\,$$\phi^{*}$), we have
\[
qq^{*}=0,\quad aa^{*}=0,\quad {\Pi}^{*}(x,\infty)+q^{*}=v(x), \qquad x>0,
\]
because
\[
\frac{\phi^{*}(\lambda)}{\lambda}=\frac{1}{\phi(\lambda)}=a^{*}+\int_{\re^+}e^{-\lambda x}v(x)\, dx,\quad \lambda\geq 0.
\]
By Lemma~\ref{exp-undershoot}, we already know that $G_{\erva}$ and $\erva- G_{\erva}$ are independent and that
\[
\er [e^{-\lambda
\erva}]=\frac{\alpha}{\alpha+\lambda}=\frac{\phi(\alpha)}{\phi(\alpha+\lambda)}\frac{\phi^{*}(\alpha)}{\phi^{*}(\alpha+\lambda)},\quad
\alpha>0, \; \lambda\geq 0.
\]
  Let $(\xi, \xi^*)$  be a a conjugated pair of subordinators associated to the pair $(\phi, \phi^*)$. For convenience, all objects associated to $\xi^*$, which we defined for $\xi$, are denoted with the superscript $*$.
  We immediately see the known fact that
  \[
  \erva- G_{\erva}\stackrel{\text{(d)}}{=}G^{*}_{\erva}\quad \hbox{and} \quad G_{\erva}+G^{*}_{\erva}\stackrel{\text{(d)}}{=}\erva,
  \]
where $G_{\erva}$ and $G^{*}_{\erva}$ are independent. Furthermore,  there exists a function $\rho:(0,\infty)\to[0,1]$ such that
\[
H(dx)=\rho(x)\frac{dx}{x}\quad \hbox{and} \quad H^*(dx)=(1-\rho(x))\frac{dx}{x},\quad x>0.
\]
 Moreover, $\rho$ is related to $\phi,$ $\Pi$ and $v$ as follows
\[
\int_{(0,\infty)}dy\, \rho(y)e^{-\lambda y}=\frac{\phi'(\lambda)}{\phi(\lambda)},\quad \lambda > 0,
\]
and
\[
\rho(x)=av(x)+\int_{[0,x]}\Pi(dy)yv(x-y),\quad x>0.
\]
It is worth pointing out that, in this case, we have the identities $I\stackrel{\text{(d)}}{=}R^{*}$ and $I^{*}\stackrel{\text{(d)}}{=}R,$ with obvious notations. Therefore, all the results obtained for the remainder random variable $R$ can be applied to obtain information about the exponential functional $I$. For instance, this can be used for the r.v.$\,$$\log I$ to determine when it has a self-decomposable or extended generalized gamma convolution distribution, to write the analogue of (\ref{factor-0})  of Theorem \ref{R-product} for it and so on. To prove the above claimed facts about $H$,
 recall that, since $(\phi,\phi^*)$ is a conjugated pair of Bernstein functions, equation (\ref{eq:1}), applied to $\phi$ and $\phi^*,$ gives that, for all  $\alpha>0$ and $\lambda\geq 0,$ we have
\begin{eqnarray*}
\exp\left\{-\int_{(0,\infty)}\frac{dx}{x}e^{-\alpha x}(1-e^{-\lambda
x})\right\} &=&\frac{\alpha}{\alpha+\lambda}=  \frac{\phi(\alpha)}{\phi(\alpha+\lambda)}\frac{\phi^{*}(\alpha)}{\phi^{*}(\alpha+\lambda)}\\
&=&\exp\left\{-\int_{(0,\infty)}H(dx)e^{-\alpha x}(1-e^{-\lambda
x})\right\} \times \\&& \quad
\exp\left\{-\int_{(0,\infty)}H^{*}(dx)e^{-\alpha x}(1-e^{-\lambda
x})\right\}.
\end{eqnarray*}
We deduce that, for each $\alpha>0,$ the L\'evy measures
$e^{-\alpha x}H(dx)+e^{-\alpha x}H^{*}(dx)$ and $e^{-\alpha x}x^{-1}dx$, on $\mathbb{R}^+$, are equal.  It follows that
\begin{equation}\label{h.p.m.}
H(dx)+H^{*}(dx)=x^{-1}dx, \quad x>0,
 \end{equation}
 and,  $H(dx)$ and $H^{*}(dx)$ are both absolutely continuous with respect to $x^{-1}dx$.  Thus, there exists a density function $\rho$ such that $\rho(x)x^{-1}dx=H(dx)$ with $0\leq \rho(x)\leq 1$, $x>0$. Analogously, $H^*$ can be written as  $H^*(dx)=\rho^{*}(x)x^{-1}dx$ for some function $\rho^*$ satisfying  $0\leq \rho^{*}(x)\leq 1$, $x>0$. Furthermore,  it follows from (\ref{h.p.m.}) that $\rho(x)+\rho^{*}(x)=1$ for a.e.  $x>0$.  The rest follows from the results in Section~\ref{furtherproperties}.

The processes of Examples \ref{example-00}, \ref{example-1} and \ref{example-5}
are particular cases of special subordinators.
\end{example}

\begin{example}\label{example-00}
Complete Bernstein functions, which are necessarily special, are those
Bernstein functions  for which the corresponding L\'evy measures  have  completely monotone densities,  see e.g.
\cite{SSV}, Chapter 6. These have the following representation
\begin{equation}\label{cbf}
 \frac{\phi(\lambda )}{\phi(1)}= \exp    \int_0^\infty   \left( \frac{1}{1+ t} -\frac{1}{\lambda + t} \right)  \eta(t)\, dt
= \exp    \int_0^\infty     \frac{\lambda - 1}{\lambda + t} \,
\frac{\eta(t)}{1+t}\, dt
\end{equation}
 for all $ \lambda \geq 0$, were $\eta :\mathbb{R}_+\rightarrow [0,1]$ is a measurable function. In this case, the density of the h.p.m.$\,$is the Laplace transform of $\eta$ i.e.
 $\rho(x)=\int_0^{\infty} e^{-xt}\eta(t)\, dt$.  Marchal \cite{Marchal} considered this class of Bernstein functions using the representation
\[
\phi^{(\alpha)}(\lambda ) =   \exp  \int_0^1    \frac{\lambda -
1}{1+(\lambda-1) x} \,\alpha(x)\, dx.
\]
Making the substitution $x=1/(1+t)$, we obtain that
\[
\phi^{(\alpha)}(\lambda ) =  \exp  \int_0^1    \frac{\lambda - 1}{
\lambda +t} \,\frac{\alpha\left(1/(1+t)\right)}{1+t}\, dt,
\]
which is, indeed, of the form (\ref{cbf}) with
$\eta(t)=\alpha\left(1/(1+t)\right) {\bf 1}_{[0,1]}(t)$.
Furthermore, in Section 16.11 of \cite{SSV}, we can find several
interesting families of complete subordinators  whose harmonic potential
densities are known explicitly.
\end{example}

\begin{example}\label{example-1} For a fixed $K>0$, let $\xi$ be the deterministic  subordinator  $\xi_t = K t,\; t\geq 0$, killed at rate $q\geq 0.$ Then, the corresponding Bernstein function is   $\phi (\lambda)=q+K\lambda$. The associated p.m.$\,$and h.p.m.$\,$are, respectively, given by
\[
V(dx)=K^{-1} e^{-qx/K} dx, \quad \mbox{and}  \quad H(dx)=e^{-qx/K}
\frac{dx}{x},\quad x>0.
\]
  In this case, $I\stackrel{\text{(d)}}{=}K^{-1}{\bf B}_{1,q/K}$  where $\,$${\bf B}_{1,q/K}$ is a r.v.$\,$following the Beta$(1,q/K)$-law.  This subordinator is special  and the Bernstein function corresponding to its conjugate is
  \[
\phi^{*}(\lambda):=\frac{\lambda}{\lambda K+q}=\frac{1}{K}\int_0^{\infty} (1-e^{-\lambda x})\left( (q/K)e^{-(q/K)x}\right) dx.
\]
 That  is $\xi^*$   is a compound Poisson process with jumps of exponential size of parameter $q/K$ and arrival rate $1/K.$  Its h.p.m.$\,$is hence given by
\[
\quad H^{*}(dx)=(1-e^{-qx/K})\,\frac{dx}{x},\quad x>0.
 \]
 To see this, note that
 \[
 \frac{{\phi^*}'(\lambda)}{\phi^*(\lambda)}=\frac{q}{\lambda} \frac{1}{\lambda K+q}=\int_0^{\infty} e^{-\lambda x} (1-e^{-qx/K})dx, \qquad \lambda\geq 0,
 \]
 and conclude using Lemma \ref{harmonic} and the injectivity of the Laplace transform.
 From standard properties of the beta and gamma distributions and some factorizations of the exponential distribution, see e.g.$\,$\cite{BY2001}, it is easy to see that the remainder random variable $R$ has the same law as $K\, \boldsymbol{\gamma}_{1+q/K}$   where this gamma variable has the p.d.f. ${\Gamma(1+ q/K)}^{-1}  x^{q/K}e^{-x},\; x>0$. This assertion can also be easily verified by calculating the moments of $R.$ Taking $K=1$, in this setting, we recover, from identity (\ref{factor-0}),  Gordon's representation of a log-gamma r.v.$\,$with shape parameter $t=1+q.$ Indeed, let $(\mathbf{e}^{(i)}, i\geq 1)$ be an i.i.d.$\,$sequence of r.v.'s following a standard exponential distribution. For any $k\geq 1$, the random variable $G_{\mathbf{e}_{k}}$ follows the exponential distribution with parameter $q+k,$ therefore $$\mathbf{E}(G_{\mathbf{e}_{k}})-G_{\mathbf{e}_{k}}\stackrel{\text{(d)}}{=}\frac{1}{q+k}-\frac{\mathbf{e}^{(k)}}{q+k}.$$
 Identity (\ref{factor-0}) becomes
\begin{equation}\label{Gordon-representation}
\begin{split}
\log \boldsymbol{\gamma}_{1+q}\stackrel{\text{(d)}}{=}&\log R\stackrel{\text{(d)}}{=}-\gamma_{\phi}+\sum^{\infty}_{j=0}\frac{1}{(q+1)+j}-\frac{\mathbf{e}^{(k)}}{(q+1)+j}\\
&=-\left(\gamma_{\phi}+\sum^{\infty}_{k=1}\frac{q}{k(q+k)}\right)+\sum^{\infty}_{j=0}\frac{1}{1+j}-\frac{\mathbf{e}^{(k)}}{(q+1)+j}
\end{split}
\end{equation}
 and it is readily verified that the constant $\gamma_{\phi}+\sum^{\infty}_{k=1}\frac{q}{k(q+k)}$ equals Euler's number. Formula  (\ref{Gordon-representation}) is obtained by Gordon \cite{gordon} in his Theorem 2, formula (3).

Finally, for $0<\gamma<1$, the measures
\[
H_{\gamma} (dx)=\gamma\frac{ e^{-qx}}{x}\,dx\quad \text{and}\quad
H^{*}_{\gamma}(dx)=\gamma (1-e^{-qx})\,\frac{dx}{x},\quad x>0,
 \]
 are the h.p.m.'s of subordinators whose Laplace exponents are  given by the Complete Bernstein functions $\lambda \mapsto(\lambda + q)^\gamma$ and $\lambda^{\gamma}/(\lambda + q)^\gamma$, respectively.
 These  naturally arise  when subordinating, with a $\gamma$-stable subordinator, i.e.$\,$the subordinator described just before (\ref{stable}).
\end{example}

\begin{example}   Recall that the so-called Mittag-Leffler function $E_\alpha,\; \alpha >0$, is defined
by
\begin{equation}\label{mittag}
E_\alpha(z)= \sum_{k \geq 0} \frac{z^k}{\Gamma (k \alpha +1)}, \quad
z\in \mathbb{C}.
\end{equation}
This corresponds, in the case $0<\alpha <1$, to
$$E_\alpha(z)=\er\Big[e^{\,z\,(S^\alpha_1)^{-\alpha}}\Big]\,,$$
where $S^\alpha$ is an $\alpha$-stable subordinator.
\begin{prop}\label{stagamma}
The h.p.m. $\,H_{q}^{\alpha}(dx)\,$ associated
to the subordinator ${(X_s)}_{ s\geq 0},$ obtained by killing a
standard $\alpha$-stable subordinator $S^\alpha$, at rate $q\geq 0$,  is
\[
H_{q}^{\alpha}(dx)=\alpha  \, E_\alpha(-q
x^\alpha)  \frac{dx}{x},\quad x>0.
\]
\end{prop}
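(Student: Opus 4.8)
The plan is to compute $H_q^\alpha$ directly from its defining integral, exploiting the self-similarity of the stable subordinator together with the probabilistic representation of the Mittag-Leffler function recalled just before the statement; throughout, $0<\alpha<1$. Write $S^\alpha$ for the standard $\alpha$-stable subordinator and $g$ for the density of $S^\alpha_1$. Since $X$ is $S^\alpha$ killed at rate $q$, the discussion in Subsection~\ref{preliminaries} gives $\pr(X_t\in dx)=e^{-qt}\,\pr(S^\alpha_t\in dx)$, while the self-similarity relation $S^\alpha_t\stackrel{\text{(d)}}{=}t^{1/\alpha}S^\alpha_1$ yields $\pr(S^\alpha_t\in dx)=t^{-1/\alpha}g(xt^{-1/\alpha})\,dx$. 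Hence, by the definition of the h.p.m.,
\[
H_q^\alpha(dx)=\left(\int_0^\infty \frac{dt}{t}\,e^{-qt}\,t^{-1/\alpha}\,g\!\left(xt^{-1/\alpha}\right)\right)dx,\qquad x>0.
\]

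First I would perform the substitution $u=x\,t^{-1/\alpha}$, i.e. $t=x^\alpha u^{-\alpha}$, in the inner integral; since all integrands are nonnegative, Tonelli's theorem legitimizes extracting the density and the ensuing monotone change of variables. A short computation shows that $dt/t=-\alpha u^{-1}\,du$, that $t^{-1/\alpha}=x^{-1}u$, and that $g(xt^{-1/\alpha})=g(u)$, so the powers of $u$ and $x$ collapse and the display becomes
\[
H_q^\alpha(dx)=\frac{\alpha}{x}\left(\int_0^\infty e^{-qx^\alpha u^{-\alpha}}\,g(u)\,du\right)dx=\frac{\alpha}{x}\,\er\!\left[e^{-qx^\alpha(S^\alpha_1)^{-\alpha}}\right]dx .
\]
Recognizing the bracketed expectation as $E_\alpha(-qx^\alpha)$ through the identity $E_\alpha(z)=\er[e^{\,z\,(S^\alpha_1)^{-\alpha}}]$ finishes the argument; in particular $E_\alpha(0)=1$ recovers $H_0^\alpha(dx)=\alpha\,dx/x$, in agreement with the $\gamma$-stable case treated earlier.

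For a cross-check (and an alternative that bypasses the transition density), one could instead invoke Lemma~\ref{harmonic}: with $\phi(\lambda)=q+\lambda^\alpha$ one has $\phi'(\lambda)/\phi(\lambda)=\alpha\lambda^{\alpha-1}/(q+\lambda^\alpha)$, so by (\ref{fisurfi'}) this function is the Laplace transform of the nonnegative measure $xH_q^\alpha(dx)$; since the classical computation $\int_0^\infty e^{-\lambda x}E_\alpha(-qx^\alpha)\,dx=\lambda^{\alpha-1}/(\lambda^\alpha+q)$, obtained by integrating (\ref{mittag}) term by term, is valid for $\lambda^\alpha>q$, the two representing measures agree on an interval and hence everywhere, which again gives $xH_q^\alpha(dx)=\alpha E_\alpha(-qx^\alpha)\,dx$. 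I do not expect a genuine obstacle: the only delicate points are the nonnegativity needed to apply Tonelli, which is immediate, and keeping the standing restriction $0<\alpha<1$ so that $S^\alpha$ and the stated representation of $E_\alpha$ are meaningful.
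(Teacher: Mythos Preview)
Your proof is correct and follows essentially the same route as the paper: both exploit the self-similarity $S^\alpha_t\stackrel{\text{(d)}}{=}t^{1/\alpha}S^\alpha_1$ together with the killing factor $e^{-qt}$, and then perform the change of variable $t\leftrightarrow (x/S^\alpha_1)^\alpha$ to make the expectation $\er[e^{-qx^\alpha(S^\alpha_1)^{-\alpha}}]=E_\alpha(-qx^\alpha)$ appear. The only cosmetic difference is that you work with the explicit density $g$ of $S^\alpha_1$ whereas the paper tests against a generic nonnegative function $f$; your additional Laplace-transform cross-check via Lemma~\ref{harmonic} is a pleasant extra not present in the paper.
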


\begin{proof}
Let $f:\re^{+}\to \re^{+}$ be a   measurable function. Using the scaling property, Fubini's theorem and the  change of variables $s=t^{1/\alpha}S_{1},$ we obtain the following formula for the h.p.m.$\,$of $X$
\begin{equation}
\begin{split}
\int_{[0,\infty)}H_{q}^{\alpha}(dx)f(x)&:=\int^{\infty}_{0}\frac{dt}{t}\er \left[f(X_{t})\right]\\
&= \int^{\infty}_{0}\frac{dt}{t}e^{-qt}\er \left[f(t^{1/\alpha}S^\alpha_{1})\right]\\
&=\int^{\infty}_{0}\frac{ds}{s}\alpha\er \left [\exp\{-q (s
/S^\alpha_{1})^{\alpha} \}\right]f(s)
\end{split}
\end{equation}
and the result follows.\end{proof}

As a consequence, we get that $\log{I}$, with $I:=\int^{\infty}_{0}e^{-X_{s}}ds$, belongs to the $B(\re)$-class, see
Barndorff-Nielsen et al.$\,$\cite{BNMS}, and its L\'evy measure has the density
\[
\frac{e^{x}}{|x|(1-e^{x})}\left((1-\alpha)+\alpha \er
\left[1-e^{-q(-x)^{\alpha} (S^\alpha_{1})^{-\alpha}} \right]\right),
\quad x<0.
 \]
 Similarly, the random variable $\log{R}$, associated to $X,$ belongs to the class of extended generalized gamma convolutions.

 As a consequence of the above remark,  the L\'evy measure of the distribution of $\log{R}$  is absolutely continuous with respect to the probability distribution of a one sided Linnik r.v.$\,$We think that this link  deserves to be studied in further detail.
\end{example}

\begin{example}\label{eaxample:6}
The following Lemma is useful for the construction of explicit examples of  L\'evy measures of  remainder random variables.  Its proof is easy and is based on elementary properties of Poisson processes, so we omit the details.
\begin{lemma}\label{EHP}
Assume $\xi$ is a subordinator with drift $a\geq 0,$ no killing term
and finite L\'evy measure $\Pi.$ Let $(N_{t}, t\geq 0)$ be a Poisson
process with intensity $\Pi(0,\infty)=:c,$ $(Y_{i})_{i\geq 1}$
i.i.d.$\,$random variables which are independent of $N,$ with common
distribution $c^{-1}\Pi({\rm d}x),$ and $S_{n}=\sum^{n}_{i=1}Y_{i},$
$n\geq 0.$ Hence $\xi_{t}=a\,t+S_{N_{t}}=a\,
t+\sum^{N_{t}}_{i=1}Y_{i},$ for $t\geq 0.$ When the drift $\,a$ is
strictly positive, we have the equality of measures
$$\int_{(0,\infty)}\frac{{\rm d}t}{t}\pr(\xi_{t}\in {\rm d}x)=
\frac{c}{a} \sum^{\infty}_{n=0}\frac{1}{n!}\er
\left[\left(\frac{c}{a}(x-S_{n})\right)^{n-1}\exp\left\{-\frac{c}{a}(x-S_{n})\right\}{\bf
1}_{\{S_{n}\leq x\}}\right]\,{\rm d}x$$ on $(0,\infty),$ with the
convention $S_0\equiv0$. While, when  the drift $a$ is zero,  we
have
\[
\int^{\infty}_{0}\frac{{\rm d}t}{t}\pr (\xi_{t}\in {\rm d}x)=\sum_{n\geq 1}\frac{1}{n}\pr (S_{n}\in dx)
\]
on $\ (0,\infty)$; that is, the h.p.m.$\,$of $\xi$ coincides with the harmonic renewal measure of the random walk $(S_{n}, n\geq 1).$
\end{lemma}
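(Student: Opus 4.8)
The plan is to exploit that, since $\Pi$ is finite and $\xi$ has no killing, the L\'evy--It\^o decomposition recorded earlier collapses to the compound Poisson representation of the statement, $\xi_t=a\,t+S_{N_t}$ with $S_0\equiv 0$. Conditioning on $N_t$ and on the jumps $(Y_i)$ one gets, for each $t>0$,
\[
\pr(\xi_t\in dx)=\sum_{n\geq 0}e^{-ct}\frac{(ct)^n}{n!}\,\er\bigl[\delta_{a t+S_n}(dx)\bigr].
\]
The first step will be to integrate this identity against $dt/t$ and to move the integral in $t$ inside the sum over $n$ and the expectation over $(S_n)$; this interchange is legitimate by Tonelli's theorem, all terms being nonnegative measures, so no domination argument is needed. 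One is then reduced to evaluating, for each $n$, the $t$-integral of $t^{-1}e^{-ct}(ct)^n/n!$ against $\delta_{a t+S_n}(dx)$.

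The second step will split according to whether $a$ vanishes. When $a=0$, the inner integral factorizes as $\pr(S_n\in dx)$ times the scalar $\int_0^\infty t^{-1}e^{-ct}(ct)^n/n!\,dt=\Gamma(n)/n!=1/n$ for $n\geq 1$, whereas the $n=0$ term equals $\bigl(\int_0^\infty t^{-1}e^{-ct}\,dt\bigr)\delta_0(dx)$, a measure carried by $\{0\}$ that does not contribute on $(0,\infty)$; since $S_n>0$ a.s. for $n\geq 1$, this produces exactly the harmonic renewal measure $\sum_{n\geq 1}n^{-1}\pr(S_n\in dx)$. When $a>0$, for fixed $n$ and fixed $S_n$ the map $t\mapsto x:=a\,t+S_n$ is a bijection of $(0,\infty)$ onto $(S_n,\infty)$ with $t=(x-S_n)/a$ and $dt=dx/a$; pushing the measure $t^{-1}e^{-ct}(ct)^n/n!\,dt$ forward through it yields the density $\tfrac{1}{n!}(c/a)^n(x-S_n)^{n-1}\exp\{-\tfrac{c}{a}(x-S_n)\}\mathbf 1_{\{S_n\le x\}}$ in the variable $x$. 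Taking expectations, summing over $n$, and writing $(c/a)^n=(c/a)\cdot(c/a)^{n-1}$ to pull out the common factor $c/a$ will reproduce the displayed formula.

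I do not expect a genuine obstacle here: the computation reduces to the Gamma integral $\int_0^\infty t^{n-1}e^{-ct}\,dt=\Gamma(n)/c^n$ and a one-line change of variables, and the sum--integral--expectation interchange is automatic by nonnegativity. The only points deserving a line of care are the handling of the $n=0$ summand and the behaviour at the left endpoint: for $n=0$ the term is the \emph{creeping} contribution $e^{-ct}\delta_{a t}(dx)$, which in the case $a>0$ becomes the summand $x^{-1}e^{-cx/a}\,dx$ and accounts for the non-integrability of $H$ near $0$ (so the stated expression is indeed a density on $(0,\infty)$, though not locally integrable at the origin, as it must be), while in the case $a=0$ it is the $\delta_0$ piece, which is simply discarded on $(0,\infty)$.
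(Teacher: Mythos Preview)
Your argument is correct. The paper does not actually give a proof of this lemma: it states that ``its proof is easy and is based on elementary properties of Poisson processes, so we omit the details.'' Your proposal is precisely such a proof---conditioning on $N_t$, interchanging by Tonelli, and then either evaluating the Gamma integral (for $a=0$) or performing the affine change of variables $x=at+S_n$ (for $a>0$)---and all the bookkeeping, including the $n=0$ summands and the restriction to $(0,\infty)$, is handled correctly.
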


Assume for instance that $\xi$ is a subordinator with no killing, drift $1,$ and L\'evy measure
\[
\Pi({\rm d}x)=\frac{c}{\Gamma(\beta)} x^{\beta-1}e^{-x}{\rm
d}x,\quad x>0,
 \]
 where $c,\,\beta >0$. Equivalently, the Laplace exponent is given by $\lambda\mapsto \lambda+c (1- \frac{1}{(1+\lambda)^\beta})$.  It follows from Lemma \ref{EHP} and elementary calculations that the h.p.m.$\,$of $\xi$ is given by
\[
\left(1+\sum^{\infty}_{n=1}\frac{c^{n}
x^{n(1+\beta)}\Gamma(n\beta)}{n\Gamma(n(1+\beta))}\right)e^{-c
x}x^{-1}dx,\quad x>0.
 \]
 Furthermore, assuming the same jump structure and no drift, i.e.$\,$the Laplace exponent is  $\lambda\mapsto c (1- \frac{1}{(1+\lambda)^\beta}),$ we find the h.p.m.
 \[
 \beta\left(E_{\beta}(x^{\beta})-1 \right) e^{-x}x^{-1}dx ,\quad x>0. \]

\end{example}

\begin{example}\label{example-5}
 Assume that $\xi$ is the ascending ladder time, with Laplace exponent $\phi$, of a general real valued L\'evy process  $X$. It follows from the Lemma~\ref{exp-undershoot} and the identity (5) in page 166 of \cite{Bertoin-1996}, which is a consequence of Fristedt's formula, that the h.p.m.$\,$of $\xi$ is given by
\begin{equation}\label{hpm-stable}
H(dx)= \pr(X_x\geq 0)\, \frac{dx}{x},\quad x>0.
 \end{equation}
 We further note that, in the stable case, we have  $\pr (X_t\geq 0)=\beta$ for all $t>0,$ for some  $0<\beta<1$ which is called the positivity index of $X.$ Recall that $\er [\erv^{s}]=\Gamma(s+1)$, where $\erv$ is a standard exponential random variable, and
\begin{equation}\label{integral-gamma}
\log \Gamma(s+1)=\int_0^{\infty}
\left(s-\frac{1-e^{-st}}{1-e^{-t}} \right)e^{-t}\frac{dt}{t}
\end{equation}
which is found for example in Theorem 1.6.2 in page 28 of
\cite{aar}. It follows from a combination of (\ref{loggc}),
(\ref{hpm-stable}) and (\ref{integral-gamma})  that
$\Gamma_\phi(s)=(\Gamma(s))^{\beta}$ and
$\Gamma^*(\phi(\cdot);s)=\Gamma_{\phi^{*}}(s)=(\Gamma(s))^{1-\beta}$.
The characteristics of $\log R$ and $\log I$ are easy to obtain from
the above expressions.
\end{example}

\noindent{\bf Acknowledgment:} We are grateful to
anonymous referees for their valuable comments that improved the
presentation of this paper. We are also indebted to l'Agence
Nationale de la Recherche for  the research grant
ANR-09-Blan-0084-01. The work of the second author is supported by
King Saud University, Riyadh, DSFP, through the grant MATH 01. The
third author acknowledges support from CONACYT grant no.~134195
Project {\it Teor{\'\i }a y aplicaciones de procesos de L\'evy}, and
thanks the support provided by the CEREMADE CNRS-UMR 7534 where he
concluded this work.

\end{document}